\newtheorem{theorem}{Theorem}
\newtheorem{corollary}[theorem]{Corollary}
\theoremstyle{definition}
\newtheorem{example}[theorem]{Example}
\newtheorem{remark}[theorem]{Remark}
\newtheorem{claim}[theorem]{Claim}
\newcommand{\inj}{\mbox{\rm Inj}\hspace{0.02in}}
\newcommand{\buletita}{{\tiny$\bullet$}}
\newcommand{\reg}{\mbox{\rm Reg}\hspace{0.02in}}
\newcommand{\sur}{\mbox{\rm Sur}\hspace{0.02in}}
\begin{document}
\title[Surjection and inversion for locally Lipschitz maps]{Surjection and inversion for locally Lipschitz maps between Banach spaces}

\author{Olivia Gut\'u and Jes\'us A. Jaramillo}

\address{O. Gut\'u: Departamento de Matem\'aticas \\ Universidad de Sonora \\  83000 Hermosillo, Sonora,  M\'exico}

\email{oliviagutu@mat.uson.mx}

\address{J. A. Jaramillo: Instituto de Matem\'atica Interdisciplinar (IMI) \\  Universidad Complutense de Madrid\\ 28040 Madrid, Spain}

\email{jaramil@mat.ucm.es}

\thanks{Research supported in part by MICINN under Project MTM2015-65825-P (Spain).}

\keywords{Global invertibility; Palais-Smale condition; Nonsmooth analysis}

\subjclass[2000]{47J07, 58E05, 49J52}


\maketitle


\begin{abstract}

We study the global invertibility of non-smooth, locally Lipschitz maps between infinite-dimensional Banach spaces, using a kind of Palais-Smale condition. To this end, we consider the Chang version of the weighted Palais-Smale condition for locally Lipschitz functionals in terms of the Clarke subdifferential, as  well as the notion of pseudo-Jacobians in the infinite-dimensional setting, which are the analog of the pseudo-Jacobian matrices defined by Jeyakumar and Luc. Using these notions,  we derive our results about existence and uniqueness of solution for nonlinear equations. In particular, we give a version of the classical Hadamard integral condition for global invertibility in this context.

\end{abstract}


\section{Introduction}

Surjectivity and invertibility of maps is an important issue in nonlinear analysis. In a smooth setting, if $f:X\to Y$ is a $C^1$ map between Banach spaces, such that its derivative $f'(x)$ is an isomorphism for every $x\in X$, from the classical Inverse Function Theorem we have that $f$ is locally invertible around each point. If, in addition,  $f$ satisfies the so-called {\it Hadamard integral condition:}
$$
\int_0^{\infty} \inf_{\Vert x \Vert \leq t} \Vert f'(x)^{-1} \Vert^{-1} \, {\rm d}t = \infty,
$$
then $f:X\to Y$ is globally invertible, and thus a global diffeomorphism from $X$ onto $Y$ (see e.g. the paper by Plastock \cite{Plastock} for a proof of this result).  This sufficient condition for global invertibility was first considered by Hadamard \cite{Hadamard} for maps between finite-dimensional spaces,  and has been widely used since then. We refer to the recent survey paper \cite{Gu} for an extensive information about this and other conditions for global invertibility of smooth maps between Banach spaces and, more generally, between
Finsler manifolds.

\

In a nonsmooth setting, if $f: \mathbb R^n \to \mathbb R^n$ is a locally Lipschitz map, Pourciau obtained in \cite{Pourciau1982} and \cite{Pourciau1988} suitable versions of the Hadamard integral condition, using the {\it Clarke generalized Jacobian}. These results have been extended to the setting of finite-dimensional Finsler manifolds in \cite{JaMaSa}. For continuous maps  $f: \mathbb R^n \to \mathbb R^n$ which are not assumed to be locally Lipschitz,  Jeyakumar and Luc introduced in \cite{JL0} the concept of {\it approximate Jacobian matrix,} which was later called {\it pseudo-Jacobian matrix} (see \cite{JL}). A global inversion theorem, with a version of the Hadamard integral condition in this context, is given in \cite{JaMaSa1}.

\

If $f:X\to Y$ is a nonsmooth map between infinite-dimensional Banach spaces, the problem of local invertibility of $f$ is more delicate. Assuming that $f$ is a local homeomorphism, F. John obtained in \cite{John} a global inversion theorem with a suitable version of the Hadamard integral condition in terms of the {\it lower scalar  Dini derivative} of $f$. Later on, Ioffe obtained in \cite{Io} a global inversion result for  a continuous map $f$ which is locally one-to-one, using an analog of Hadamard integral condition, defined in terms of the so-called {\it constant surjection} of $f$ at every point. Further results along this line have been obtained in \cite{GutuJaramillo} and \cite{GaGuJa}  in the more general setting of maps between metric spaces. More recently, in \cite{JaLaMa}, the authors consider the notion of {\it pseudo-Jacobian} for a continuous map between Banach spaces, which is an extension of pseudo-Jacobian matrices of Jeyakumar and Luc to this setting, and obtain different global inversions results in this context.

\

From the purely topological point of view, the Banach-Mazur Theorem states that a local homeomorphism $f$ between Banach spaces is a global one if and only if it is a {\it proper} map, that is, the preimage $f^{-1}$ sends compact sets to compact sets \cite{BANACHMAZUR}.  The proof of this result and the classical global inversion theorems cited above are generally addressed through the use of the path-lifting property  or some other similar approach which includes a monodromy argument. Actually,  Rabier makes a masterful  use of the path-lifting arguments in the smooth setting and get the complete geometric picture leading to a generalization of the classical Ehresmann theorem of differential geometry; see Theorem 4.1  in \cite{RABIER1997}. In particular, the Banach-Mazur Theorem and the Hadamard integral condition were non-trivially related by Rabier via a sort of ``uniform-strong'' Palais-Smale condition so called {\it strong submersion} with uniformly splits kernels.

Some conditions intimately related to strong submersions seem to escape from the monodromy technique.  Nevertheless, for functions between metric spaces,  Katriel   \cite{Ka}  proposes a completely different approach based on an abstract mountain-pass theorem  and the Ekeland variational principle, in order to obtain global inversion theorems in a non-smooth setting,  via the study of the critical points of the functional $x\mapsto d(y,f(x))$ for all $y$ in $Y$. Idczak {\it et al.} \cite{ISW} set down the Katriel approach in the Banach space setting proving  that a local diffeomorphism with a Hilbert target space  is a global diffeomorphism if the functional  $F_y(x)=\frac{1}{2}\pmb{|}f(x)-y\pmb{|}^2$ satisfies the Palais-Smale condition for all $y\in Y$. We refer to \cite{GGS} and \cite{GK-15} for further developments in this direction. See also \cite{GORDON}, an earlier reference in this line in the finite-dimensional setting.

The generalization of Idczak {\it et al.} result for functions between two real Banach spaces and the relationship of this condition with the Hadamard integral condition and the strong submersions of Rabier were established in \cite{GUTU2018} for $C^1$ maps. However the connection of the Palais-Smale conditon for $F_y$ with the nonsmooth global inversion results cited above for a locally Lipschitz map $f$ are the pending issues just addressed in this article.

\section{Calculus with pseudo-Jacobians}
Let $(X,|\cdot|)$ and  $(Y,\pmb{|}\cdot\pmb{|})$ be real Banach spaces and $U$ be a nonempty open subset of $X$. As usual, $L(X,Y)$ and $X^*$ will  denote the space of bounded linear operators from $X$ into $Y$ and the topological dual of $X$,  respectively.

\subsection*{Pseudo-Jacobians}  Let $f:U\rightarrow Y$ be a continuous map. Some important examples of a derivative-like objects for continuous maps can be included in a general frame so called pseudo-Jacobians (see \cite{JaLaMa}). The definition of a pseudo-Jacobian of $f$ at a point $x$ involves an approximation of the ``scalarized'' functions $y^*\circ f$, through all directions $y^*\in Y$ by means of upper Dini directional derivatives and a sublinearization of the approximations by a set of operators. Recall, if $\phi:U\rightarrow \mathbb{R}$ is a real-valued function and $x$ is a point in $U$, the {\it upper right-hand Dini derivative} of $\phi$ at $x$ with respect to a vector $v\in X$ is defined as:
$$\phi'_+(x;v)=\limsup_{t\rightarrow 0^+}\frac{\phi(x+tv)-\phi(x)}{t}.$$
A nonempty subset $Jf(x)\subset L(X,Y)$ is said to be a {\it pseudo-Jacobian} of $f$ at $x\in U$ if, for every $y\in Y^*$ and $v\in X$:
\begin{equation}\label{pseudojacobians}
(y^*\circ f)'_+(x;v)\leq\sup\{\langle y^*,T(v)\rangle:T\in Jf(x)\}
\end{equation}
A set-valued mapping $Jf:U\rightarrow 2^{L(X,Y)}$ is called a {\it pseudo-Jacobian mapping}  for $f$ on $U$ if for every $x\in U$ the set $Jf(x)$ is a pseudo-Jacobian of $f$ at $x$.

\

The following examples of pseudo-Jacobians  for a  continuous function $f:U\rightarrow Y$ between Banach spaces are explained with detail in \cite{JaLaMa}; see also   \cite{JL0} for the finite-dimensional theory of pseudo-Jacobians.

\begin{example} If $f$ is {\it G\^ateaux differentiable} at $x$, then the singleton $Jf(x):=\{df(x)\}$ is a pseudo-Jacobian of $f$ at $x$. In particular, this holds if $f$ is {\it Fr\'echet differentiable} or  {\it strictly differentiable}. Recall, a function $f$ is strictly differentiable at $x$ if there is a continuous linear map, denoted by $df(x)$, such that for every $\epsilon>0$ there is $\varrho>0$ such that if $u,w\in B(x;\varrho)$ then:
$$\pmb{|}f(u)-f(w)-df(x)(u-w)\pmb{|}\leq\epsilon|u-w|.$$
Now suppose that $X=\mathbb{R}^n$ and $Y=\mathbb{R}^m$. If $f$ admits a singleton pseudo-Jacobian at $x$ then  $f$ is G\^ateaux differentiable at $x$ and its derivative coincides with the pseudo-Jacobian matrix. In the infinite-dimensional setting, a continuous map between Banach spaces admits a singleton pseudo-Jacobian at a point $x$ if, and only if, it is  {\it weakly G\^ateaux differentiable} at $x$, see p. 23 in \cite{AUBIN2006}.
\end{example}

\begin{example}\label{balls}
Suppose $f$ is a {\it locally Lipschitz map}, namely,  for every $x\in U$ there exist $L,r>0$ such that, whenever $u,w\in B(x; r)\subset U$:
$$\pmb{|}f(u)-f(w)\pmb{|}\leq L|u-w|$$
Consider the {\it local Lipschitz constant} of $f$ at $x\in U$, given by:
$${\rm Lip} \, f(x)=\inf_{r>0}\sup\left\{\frac{\pmb{|}f(u)-f(w)\pmb{|}}{|u-w|}:u,w\in B(x,r) \mbox{ and } u\neq w\right\}.$$
Then the set $Jf(x):= {\rm Lip} \, f(x) \cdot \overline{B}_ {L(X, Y)} $,  defined as the unit ball centered at zero of radius ${\rm Lip} \, f(x)$ in the space $L(X,Y)$, is a pseudo-Jacobian of $f$ at $x$.
\end{example}

\begin{example} Let $f: U\subset X \rightarrow Y$ be a locally Lipschitz map. Suppose that $X=\mathbb{R}^n$ and $Y=\mathbb{R}^m$.  The {\it Clarke generalized Jacobian}  of  $f$ at $x$ is a pseudo-Jacobian of $f$ at $x$. Recall that the Clarke generalized Jacobian is equivalent to the generalized Jacobian proposed by  Pourciau in \cite{POURCIAU1977}. An extension of Clarke generalized Jacobian, enjoying all the fundamental properties desired from a derivative set,  was proposed by P\'ales and Zeidan in \cite{PZ} to the case when $X$  and $Y$ are infinite-dimensional Banach spaces, and $Y$ is a dual space satisfying the Radon-Nikodym property, in particular when $Y$ is reflexive. Let us recall the definition in this case. Given a finite-dimensional linear subspace $L\subset X$, we say that $f$ is {\it $L$-G\^ateaux-differentiable} at a point $z\in U$ if there exists a continuous linear map $D_L(z):L\to Y$ such that
$$
\lim_{t \to 0} \frac{f(z+tv) - f(z)}{t}= D_L f(z)(v), \quad \text{for every} \quad v\in L.
$$
Denote by $\Omega_L(f)$ the set made up of all points $z\in U$ such that $f$ is $L$-G\^ateaux-differentiable at $z$, and let $\partial_L f(x)$ be the subset of $\mathcal{L}(L,\, Y)$ given by the the formula
$$
\partial_L f(x) := \bigcap_{\delta>0} \overline{co}^{\tiny WOT} \{ D_L f(z) \, : \, z\in B(x, \, \delta) \cap \Omega_L(f) \},
$$
where $\overline{co}^{\tiny WOT}$ denotes the closed convex hull for the weak operator topology on $L(X, Y)$. The \emph{P\'ales-Zeidan generalized Jacobian of $f$ at the point $x$} is then defined as the set:
$$\partial f(x) = \left\{T\in \mathcal{L}(X,\, Y):\,\, T|_L \in \partial_L f(x),\,\, \text{for each finite dimensional subspace}\, L\subset X\right\}.$$
In particular, if $Y$ is reflexive, the  P\'ales-Zeidan  generalized Jacobian is indeed a pseudo-Jacobian.

\end{example}
\begin{example} If $\phi:U\subset X\rightarrow \mathbb{R}$ is a locally Lipschitz map  the {\it Clarke generalized directional derivative of} $f$ at $x$ with direction $v$ is defined by:
$$\phi^\circ(x,v):=\limsup_{z\rightarrow x, t\rightarrow 0^+}\frac{\phi(z+tv)-\phi(z)}{t}$$
It is well known that the map $v\mapsto \phi^\circ(x;v)$ is convex and continuous.
The  {\it Clarke subdifferential of $\phi$ at $x$} is the non-empty $w^*$-compact convex subset of $X^*$ defined as:
$$\partial \phi (x)=\{x^*\in X^*: x^*(v)\leq \phi^\circ(x;v), \hspace{0.2in} \mbox{for all }x\in X\}$$
Suppose that  $Y=\mathbb{R}$. Then the Clarke subdifferential of $f$ at $x$ is a pseudo-Jacobian of $f$ at $x$.
\end{example}

The theory of pseudo-Jacobians includes a sort of mean value theorem, an optimality condition for real-valued functions and some partial results concerning the chain rule (see \cite{JaLaMa}). In order to get desirable local and global surjection and inversion theorems it is necessary the validity of the chain rule for the composition with distance functions.

\subsection*{Chain Rule and Strong Chain Rule conditions}
Let  $(X,|\cdot |)$ and  $(Y, \pmb{|}\cdot\pmb{|})$ be real Banach spaces, $U$ be an open subset of $X$ and $f:U\rightarrow Y$ be a continuous map  with a pseudo-Jacobian map  $Jf$. For every $y\in Y$, consider the functional:
\begin{equation}
\label{funcionFY}F_y(x):=\pmb{|}f(x)-y\pmb{|}
\end{equation}
For every $x\in U$ and $y\neq f(x)$, we shall define the subset of $X^*$:
$$
\Delta F_y(x):=\partial \pmb{|}\cdot \pmb{|}(f(x)-y)\circ\overline{\rm co}(Jf(x)).
$$
According to \cite{JaLaMa}, we say that that $Jf$ satisfies the {\it chain rule condition} on $U$ if, for each $x\in U$ and $y\neq f(x)$, the set $\Delta F_y(x)$ is a $w^*$-closed and convex subset of $X^*$ and is a pseudo-Jacobian of the functional $F_y$.  We shall say that $Jf$ satisfies the {\it strong chain rule condition} if, in addition to the above requirements, we also have that $f$ is locally Lipschitz and $\Delta F_y(x)$ contains the Clarke subdifferential of $F_y$ at $x$.

\begin{example}\label{diff-chain}
If $f:U\subset X\rightarrow Y$ is continuous and G\^ateaux differentiable on all of $U$ then, for every $x\in U$, the pseudo-Jacobian map $Jf(x)=\{df(x)\}$ satisfies the chain rule condition; see Proposition 2.17 of \cite{JaLaMa}. Furthermore, by Theorem 2.3.10 (Chain Rule II) of \cite{Clarkebook}, we have that if  $f$ strictly differentiable, in particular $C^1$, then  $Jf$ satisfies the strong chain rule condition.
\end{example}

\begin{example}\label{paleszeidan}
Let  $f:U\subset X\rightarrow Y$ be a locally Lipschitz map, where $X$ and $Y$  are reflexive Banach spaces and $Y$ is endowed with a $C^1$-smooth norm. Consider $Jf(x):=\partial f(x)$  the P\'ales-Zeidan  generalized Jacobian of $f$ at $x$. From Corollary 2.18 of \cite{JaLaMa}  we have that $Jf$ satisfies the chain rule condition. Furthermore, taking into account that $\partial f(x)$ is a closed convex subset of $L(X, Y)$ and using Theorem 5.2 in \cite{PZ} we deduce that $Jf$ satisfies in fact the strong chain rule condition. Indeed, given $y\in Y$ we denote $g(z):= \pmb{|} z-y \pmb{|}$. Since $g$ is $C^1$ on the set  $\{ z\in Y \, : \, z\neq y\}$ we have that, for $f(x) \neq y$:
$$
\partial F_y (x) = \partial (g\circ f) (x) = dg(f(x)) \circ \partial f (x) = \partial \pmb{|}\cdot \pmb{|}(f(x)-y)\circ \overline{\rm co}(\partial f(x)) = \Delta F_y(x).
$$
\end{example}

\begin{example}\label{strongballs}
Let  $f:U\subset X\rightarrow Y$ be a locally Lipschitz map, where $X$ and $Y$  are reflexive Banach spaces and $Y$ is endowed with a $C^1$-smooth norm, and consider the pseudo-Jacobian $Jf(x):={\rm Lip} \, f(x) \cdot \overline{B}_ {L(X, Y)} $ considered in Example \ref{balls}. Again from Corollary 2.18 of \cite{JaLaMa}  we have that $Jf$ satisfies the chain rule condition. Furthermore,  $Jf$ also satisfies the strong chain rule condition. Indeed, given $y\in Y$, if we denote $g(z):= \pmb{|} z-y \pmb{|}$ as before, taking into account that $\partial f (x) \subset {\rm Lip} \, f(x) \cdot \overline{B}_ {L(X, Y)}$ (see Theorem 3.8 in \cite{PZ}) we have for $f(x) \neq y$:
$$
\partial F_y (x) = \partial (g\circ f) (x) = dg(f(x)) \circ \partial f (x) \subset \partial \pmb{|}\cdot \pmb{|}(f(x)-y)\circ Jf(x) = \Delta F_y(x).
$$
\end{example}

\section{Pseudo-Jacobians and local inverse theorems}
Let $(X,|\cdot|)$ and $(Y,\pmb{|}\cdot\pmb{|})$ be real Banach spaces, $U$ be an open  subset of $X$, and let $f:U\rightarrow Y$ be a locally Lipschitz map with a pseudo-Jacobian $Jf$ satisfying the chain rule condition. If $T:X\rightarrow Y$ is a bounded linear operator, we consider its {\it Banach constant}, see p. 4 in \cite{IOFFE2017}:
 $$C(T)=\inf_{\pmb{|}v^*\pmb{|}_{Y^*}=1}|T^*v^*|_{X^*}.$$
The Banach constant coincides with the quantity $\sigma(T)$ in \cite{PALES1997}  and also with the number $\tau_T$ in \cite{AZE2006}, see also \cite{RABIER1997}.  Recall, the Banach constant $C(T)$ is positive if and only if $T$ is onto; in such case by the Open Mapping Theorem, $T$ is an open  map. There are a large number of nonlinear versions of this openness criterion e.g.  for a strictly differentiable map $f$ (see \cite{Do}): if $C(df(x_0))>0$ then $f$ {\it is open with linear rate around $x_0$}, namely, there exist a neighborhood $V$ of $x_0$ and a constant $\alpha>0$ such that for every $x\in V$ and $r>0$ with $B(x;r)\subset V$:
\begin{equation}\label{metricregularityA}B(f(x);\alpha r)\subset f(B(x; r)).\end{equation}
A natural quantity to consider in the pseudo-Jacobian frame is the following:
$$\sur Jf (x)=\sup_{r>0}\inf\{C(T): T\in{\rm co}\hspace{0.01in} Jf(B(x;r))\}.$$
Of course, if $f$ is strictly differentiable and $Jf(x):=\{df(x)\}$ then we have that  $\sur Jf (x) = C (df(x)).$

From the very definition, it is clear that the functional $\sur Jf: U\rightarrow [0,\infty)$ is lower semicontinuous. On the other hand, if the set valued map $Jf:U\rightarrow 2^{L(X,Y)}$ is upper semicontinuous at a point $x$, from Proposition 3.4 in \cite{JaLaMa} we have that
$$\sur Jf(x)=\inf\{C(T):T\in{\rm co}\hspace{0.01in} Jf(x)\}.$$

\begin{example}\label{sum}
Let $X$ and $Y$  be reflexive Banach spaces, where $Y$ is endowed with a $C^1$-smooth norm. Consider a map $f:U\subset X\rightarrow Y$ of the form $f=f_1+f_2$, where $f_1:U\rightarrow Y$ is $C^1$-smooth and $f_2:U\rightarrow Y$ is locally Lipschitz. From Example \ref{diff-chain} and Example \ref{strongballs},  we see that $Jf(x):= df_1(x) + {\rm Lip} \, f_2(x) \cdot \overline{B}_ {L(X, Y)} $ is a pseudo-Jacobian of $f$ on $U$, satisfying the chain rule condition. Furthermore, it can be checked as before that in fact $Jf$ satisfies the strong chain condition. Indeed, if we denote $\eta(z):= \pmb{|} z \pmb{|}$, using  Theorem 5.2, Corollary 5.4 and Theorem 3.8 in \cite{PZ}) we have that, for $f(x) \neq y$:
$$
\partial F_y (x) = d\eta (f(x) -y) \circ \partial f(x) = d\eta (f(x) -y) \circ (\partial f_1(x) + \partial f_2(x) )
$$
$$
=  d\eta (f(x) -y) \circ ( df_1(x) + \partial f_2(x)) \subset \partial \pmb{|}\cdot \pmb{|}(f(x)-y)\circ Jf(x) = \Delta F_y(x).
$$
On the other hand, it is not difficult to check (see Example 2.6 in \cite{JaLaMa}) that the set-valued map $Jf: U \rightarrow 2^{L(X, Y)}$  is upper semicontinuous on $U$, so from Proposition 3.4 in \cite{JaLaMa} we obtain that for each $x$ in $U$:
$$\sur Jf (x) = \inf\{C(T) \, : \, T\in  Jf(x)\}= \inf \{C(df_1(x) + R) \, : \, \Vert R \Vert \leq {\rm Lip} \, f_2(x)\}.$$
\end{example}

Now, from Theorem 3.1 of \cite{JaLaMa} we have:

\begin{theorem}[{\bf local openess}]\label{localopeness}
Let $(X,|\cdot|)$ and $(Y,\pmb{|}\cdot\pmb{|})$ be Banach spaces, $U$ be an open subset of $X$ and let $f:U\rightarrow Y$ be a continuous map  with  pseudo-Jacobian  $Jf$ satisfying the  chain rule condition on U.  If $\sur Jf(x_0)>0$ then $f$ is open with linear rate around $x_0$. More precisely, for each $0<\alpha<\sur Jf(x_0)$ there is a neighborhood $V$ of $x_0$ such that for every $x\in V$ and $r>0$ with $B(x;r)\subset V$ the inclusion \eqref{metricregularityA} holds.
\end{theorem}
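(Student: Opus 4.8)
The plan is to run an Ekeland variational argument on the functionals $F_y$ from \eqref{funcionFY}, turning the positivity of $\sur Jf(x_0)$ into a quantitative descent estimate that forces $F_y$ to attain the value $0$ near each prescribed point. First I would fix $\alpha<\alpha'<\sur Jf(x_0)$ and use the definition of $\sur$ as a supremum over radii to select $r_0>0$ with $\inf\{C(T):T\in{\rm co}\,Jf(B(x_0;r_0))\}>\alpha'$; then set $V:=B(x_0;r_0/2)$, small enough that every admissible $B(x;r)\subset V$ together with its Ekeland point stays inside $B(x_0;r_0)$. Given $x\in V$ and $r>0$ with $B(x;r)\subset V$, and $y$ with $F_y(x)=\pmb{|}f(x)-y\pmb{|}<\alpha r$, the goal is to produce $\bar x\in B(x;r)$ with $f(\bar x)=y$, which is exactly \eqref{metricregularityA}.

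Next I would apply the Ekeland variational principle to the continuous, nonnegative functional $F_y$ on the complete metric space $\overline B(x;r)$, starting at $x$ with $\varepsilon:=F_y(x)<\alpha r$ (the case $\varepsilon=0$ being trivial) and a parameter $\lambda$ chosen with $\varepsilon/\alpha<\lambda<r$, so that $\varepsilon/\lambda<\alpha$. This produces $\bar x$ with $|\bar x-x|\le\lambda<r$, hence $\bar x\in B(x;r)\subset B(x_0;r_0)$, and with the minimality property $F_y(\bar x)\le F_y(z)+(\varepsilon/\lambda)|z-\bar x|$ for all $z\in\overline B(x;r)$. Arguing by contradiction, suppose $f(\bar x)\neq y$. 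Since $\bar x$ is interior to the ball, the minimality gives, for every direction $v$, the lower bound $(F_y)'_+(\bar x;v)\ge-(\varepsilon/\lambda)|v|$ on the upper Dini derivative. On the other hand, since $Jf$ satisfies the chain rule condition, $\Delta F_y(\bar x)$ is a pseudo-Jacobian of $F_y$ at $\bar x$, so by \eqref{pseudojacobians} we also have $(F_y)'_+(\bar x;v)\le\sup\{\langle x^*,v\rangle:x^*\in\Delta F_y(\bar x)\}$. Combining the two estimates yields $\sup\{\langle x^*,v\rangle:x^*\in\Delta F_y(\bar x)\}\ge-(\varepsilon/\lambda)|v|$ for every $v\in X$.

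The crucial step, which I expect to be the main obstacle, is to read this pointwise slope inequality as a statement about the distance from the origin to $\Delta F_y(\bar x)$. Here the chain rule condition is used decisively: $\Delta F_y(\bar x)$ is $w^*$-closed and convex, so Hahn--Banach separation in the $w^*$-topology yields the duality $\inf_{|v|\le1}\sup\{\langle x^*,v\rangle:x^*\in\Delta F_y(\bar x)\}=-\,{\rm dist}(0,\Delta F_y(\bar x))$; taking the infimum over $|v|\le1$ above therefore forces ${\rm dist}(0,\Delta F_y(\bar x))\le\varepsilon/\lambda<\alpha$. To contradict this I would bound the same distance from below using the structure $\Delta F_y(\bar x)=\partial\pmb{|}\cdot\pmb{|}(f(\bar x)-y)\circ\overline{\rm co}(Jf(\bar x))$: since $f(\bar x)\neq y$, every element of $\partial\pmb{|}\cdot\pmb{|}(f(\bar x)-y)$ has norm $1$, so each element of $\Delta F_y(\bar x)$ has the form $T^*v^*$ with $\pmb{|}v^*\pmb{|}_{Y^*}=1$ and $T\in\overline{\rm co}(Jf(\bar x))$, whence $|T^*v^*|_{X^*}\ge C(T)$.

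Finally, as $\bar x\in B(x_0;r_0)$ we have $\overline{\rm co}(Jf(\bar x))\subset\overline{{\rm co}\,Jf(B(x_0;r_0))}$, and because $C$ is $1$-Lipschitz for the operator norm (from $|T_1^*v^*|\le|T_2^*v^*|+\|T_1-T_2\|$) the strict bound $C(T)>\alpha'$ survives passage to the closure; hence ${\rm dist}(0,\Delta F_y(\bar x))\ge\alpha'>\alpha$, the desired contradiction. Therefore $f(\bar x)=y$, and since $\bar x\in B(x;r)$ the inclusion \eqref{metricregularityA} follows. The two delicate points to monitor are the $w^*$-duality, which is precisely what the chain rule hypothesis is designed to supply, and the coordinated choice of $V$ and $\lambda$ keeping the Ekeland point within the region where the Banach-constant bound $C>\alpha'$ remains available.
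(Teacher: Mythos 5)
Your argument is correct. The paper gives no proof of this statement---it is imported verbatim from Theorem 3.1 of \cite{JaLaMa}---and your Ekeland-plus-separation argument is essentially the proof given there: the variational principle applied to $F_y$ on a closed ball, the $w^*$-separation (Alaoglu plus Hahn--Banach in $(X^*,w^*)$, which is exactly what the $w^*$-closedness and convexity in the chain rule condition are for) converting the slope estimate into ${\rm dist}(0,\Delta F_y(\bar x))\leq \varepsilon/\lambda$, and the lower bound ${\rm dist}(0,\Delta F_y(\bar x))\geq \alpha'$ via the Banach constant, using that $\overline{\rm co}$ is the norm closure so that the $1$-Lipschitz continuity of $C$ preserves the bound.
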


As a counterpart to the Banach constant, we  consider the {\it dual Banach constant} of a bounded linear operator $T:X\rightarrow Y$  (see \cite{IOFFE2017}, p. 5) defined by:
 $$C^*(T)=\inf_{{|u|}_{X}=1}\pmb{|}Tu\pmb{|}_{Y}.$$
Note that $C^*(T)$ coincides with $\sslash T \sslash$,  the co-norm of $T$ considered by Pourciau \cite{Pourciau1982}, \cite{Pourciau1988} in a finite-dimensional setting, and also in \cite{JaLaMa}. If $C^*(T)>0$ then $T$ is one-to-one. Furthermore,  if $T$ is an isomorphism it is not difficult to check that:
$$
C(T)=C^*(T)=\|T^{-1}\|^{-1}.
$$
The natural quantity to consider in the pseudo-Jacobian frame is the following:
$$\inj Jf(x)=\sup_{r>0}\inf\{C^*(T): T\in{\rm co}\hspace{0.01in} Jf(B(x;r))\}.$$

Using the Mean Value Property given in Theorem 2.7 of \cite{JaLaMa} and proceeding as in the proof of Lemma 3.8 of \cite{JaLaMa},  we have the following:

\begin{theorem}[{\bf local injectivity}]
Let $(X,|\cdot|)$ and $(Y,\pmb{|}\cdot\pmb{|})$ be Banach spaces, $U$ be an open subset of $X$, and let $f:U \rightarrow Y$ be a continuous map  with  pseudo-Jacobian $Jf$ satisfying the  chain rule condition on $U$. If $\inj Jf(x_0)>0$ then $f$ is locally one-to-one at $x_0$. More precisely,  for each $0<\alpha<\inj Jf(x_0)$ there exists a neighborhood $V$ of $x_0$ such that for every $u,w\in V$ we have: $$\pmb{|}f(u)-f(w)\pmb{|}\geq \alpha |u-w|.$$
\end{theorem}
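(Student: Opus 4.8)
The plan is to read off the quantitative injectivity bound directly from the Mean Value Property (Theorem 2.7 of \cite{JaLaMa}), exploiting the fact that $\inj Jf$ is defined through the co-norm $C^*$ of the elements of the \emph{operator} convex hull $\mathrm{co}\,Jf(B(x_0;r))$, not merely of the individual members of $Jf$. First I would unwind the definition: since
$$\alpha<\inj Jf(x_0)=\sup_{r>0}\inf\{C^*(T):T\in\mathrm{co}\,Jf(B(x_0;r))\},$$
there is a radius $r>0$ with $\inf\{C^*(T):T\in\mathrm{co}\,Jf(B(x_0;r))\}>\alpha$. Recalling $C^*(T)=\inf_{|u|=1}\pmb{|}Tu\pmb{|}$, this says that every $S\in\mathrm{co}\,Jf(B(x_0;r))$ satisfies $\pmb{|}Sv\pmb{|}\geq\alpha|v|$ for all $v\in X$. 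I would then take $V:=B(x_0;r)$, an open convex neighborhood of $x_0$, so that for any $u,w\in V$ the segment $[u,w]$ lies in $V$ and hence $Jf([u,w])\subseteq Jf(B(x_0;r))$.

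The key step is to turn this uniform lower bound on operators into a lower bound on $\pmb{|}f(u)-f(w)\pmb{|}$. Since evaluation at the fixed vector $u-w$ is linear, the convex hull of the image vectors is exactly the image of the operator convex hull,
$$\mathrm{co}\{\,T(u-w):T\in Jf([u,w])\,\}=\{\,S(u-w):S\in\mathrm{co}\,Jf([u,w])\,\}.$$
As $\mathrm{co}\,Jf([u,w])\subseteq\mathrm{co}\,Jf(B(x_0;r))$, each such $S$ gives $\pmb{|}S(u-w)\pmb{|}\geq\alpha|u-w|$, so every element of this convex hull has norm at least $\alpha|u-w|$. This is precisely where passing through the \emph{operator} convex hull is indispensable: if one only had $C^*(T)>\alpha$ for single $T\in Jf$, a convex combination could suffer cancellation (take $T_1(v)=v$, $T_2(v)=-v$) and produce an image vector of tiny norm. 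Because $\{z\in Y:\pmb{|}z\pmb{|}\geq\alpha|u-w|\}$ is norm-closed and the closed convex hull $\overline{\mathrm{co}}(\cdot)$ — equivalently, by Mazur's theorem, the weakly closed convex hull — is the norm closure of the convex hull, the inclusion survives the closure:
$$\overline{\mathrm{co}}\{\,T(u-w):T\in Jf([u,w])\,\}\subseteq\{\,z\in Y:\pmb{|}z\pmb{|}\geq\alpha|u-w|\,\}.$$

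Finally I would invoke the Mean Value Property of Theorem 2.7 of \cite{JaLaMa}, which yields $f(u)-f(w)\in\overline{\mathrm{co}}\{T(u-w):T\in Jf([u,w])\}$; combined with the previous inclusion this gives $\pmb{|}f(u)-f(w)\pmb{|}\geq\alpha|u-w|$ for all $u,w\in V$ (the case $u=w$ being trivial), which is the asserted injectivity at $x_0$ with rate $\alpha$. I expect the main obstacle to be pinning down the exact form and topology in which the Mean Value Property is available: the argument requires the membership of $f(u)-f(w)$ in a convex hull of image vectors closed in the norm topology (or, via Mazur, weakly, which agrees for convex sets), because $\{\pmb{|}z\pmb{|}\geq c\}$ is \emph{not} weakly closed and the estimate would collapse under a weaker containment. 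Checking that Theorem 2.7 applies in this precise form under the standing chain rule hypothesis, exactly as in the proof of Lemma 3.8 of \cite{JaLaMa}, is the remaining technical point.
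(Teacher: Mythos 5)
Your proposal is correct and follows essentially the same route as the paper, which derives the theorem precisely by applying the Mean Value Property of Theorem 2.7 of \cite{JaLaMa} on a convex ball $B(x_0;r)$ where $\inf\{C^*(T):T\in{\rm co}\,Jf(B(x_0;r))\}>\alpha$, as in Lemma 3.8 of \cite{JaLaMa}. Your observations that the bound must be taken over the \emph{operator} convex hull and that Mazur's theorem reconciles the weak and norm closures of the convex hull of image vectors are exactly the points that make the cited argument work.
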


Combining these previous results we obtain the local inversion result below (see Theorem 3.5 of \cite{JaLaMa}). It is useful to introduce first the notion of {\it regularity.}

\

\noindent{\bf Regular pseudo-Jacobian and regularity index}. Let  $f:U\subset X\rightarrow Y$ be a continuous map between Banach spaces. We shall say that the pseudo-Jacobian $Jf$ is {\it regular} at a point $x_0\in U$ if, for some $r>0$, every operator $T\in  {\rm co}\hspace{0.01in} Jf(B(x_0;r))$ is an isomorphism and $\reg Jf(x_0)>0$, where $\reg Jf(x_0)$ is the {\it regularity index} of $f$ at $x_0$ defined as:
$$\reg Jf(x_0):=\sur Jf(x_0)=\inj Jf(x_0).$$

\

\begin{theorem}[{\bf Inverse Mapping Theorem}]\label{localinverse}
Let $(X,|\cdot|)$ and $(Y,\pmb{|}\cdot\pmb{|})$ be Banach spaces, $U$ be an open subset of $X$, $x_0\in U$, and let  $f:U\rightarrow Y$ be a locally Lipschitz map  with a pseudo-Jacobian
$Jf$ satisfying the  chain rule condition on $U$.  Suppose  $Jf$ is regular at $x_0$.  Then  $f$ is a bi-Lipschitz homeomorphism around $x_0$. More precisely,  for each $0<\alpha<\reg Jf (x_0)$ there is an neighborhood $V$ of $x_0$ contained in $U$ with the following properties:
\begin{enumerate}
\item[\buletita] The set  $W:=f(V)$ is open in $Y$.
\item[\buletita] The map $f|_{V}:V\rightarrow W$ is a bi-Lipschitz homeomorphism.
\item[\buletita] The map $f|_V^{-1}$ is $\alpha^{-1}$-Lipschitz on $V$.
\end{enumerate}
\end{theorem}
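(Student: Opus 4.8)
The plan is to combine the two preceding local results---Theorem~\ref{localopeness} (local openness) and the local injectivity theorem---under the regularity hypothesis, and then verify the three bullet points directly. Since $Jf$ is regular at $x_0$, we have $\reg Jf(x_0) = \sur Jf(x_0) = \inj Jf(x_0) > 0$, so both local theorems apply. Fix $0 < \alpha < \reg Jf(x_0)$. First I would invoke the local injectivity theorem to obtain a neighborhood $V_1$ of $x_0$ on which $\pmb{|}f(u)-f(w)\pmb{|} \geq \alpha |u-w|$ for all $u,w \in V_1$; this simultaneously gives injectivity of $f$ on $V_1$ and the lower Lipschitz bound. Then I would invoke Theorem~\ref{localopeness} with the same $\alpha$ to get a neighborhood $V_2$ of $x_0$ such that the openness-with-linear-rate inclusion \eqref{metricregularityA} holds for balls inside $V_2$. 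I would take $V$ to be a sufficiently small open ball around $x_0$ contained in $V_1 \cap V_2 \cap U$.

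The verification of the bullet points then proceeds as follows. For openness of $W = f(V)$: given any $x \in V$, choose $r>0$ small enough that $B(x;r) \subset V$; the inclusion \eqref{metricregularityA} gives $B(f(x); \alpha r) \subset f(B(x;r)) \subset W$, so $W$ is open and $f|_V$ is an open map. Combined with continuity and injectivity on $V$, this makes $f|_V : V \to W$ a homeomorphism (a continuous open bijection is a homeomorphism). For the bi-Lipschitz claim: $f$ is locally Lipschitz, so after possibly shrinking $V$ we may assume $f|_V$ is Lipschitz with some constant $L$, giving the upper bound, while the injectivity estimate $\pmb{|}f(u)-f(w)\pmb{|} \geq \alpha|u-w|$ supplies the lower bound; together these say $f|_V$ is bi-Lipschitz. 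The last bullet follows by rewriting the lower bound in terms of the inverse: for $y_1 = f(u)$, $y_2 = f(w)$ in $W$, we get $|f|_V^{-1}(y_1) - f|_V^{-1}(y_2)| = |u-w| \leq \alpha^{-1} \pmb{|}f(u)-f(w)\pmb{|} = \alpha^{-1}\pmb{|}y_1 - y_2\pmb{|}$, which is exactly $\alpha^{-1}$-Lipschitzness of $f|_V^{-1}$.

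The only genuinely delicate point is ensuring that the two neighborhoods produced by the two cited theorems can be intersected to give a single neighborhood on which \emph{all} the conclusions hold simultaneously, and that shrinking $V$ does not destroy any of them. This is harmless because both local openness and the injectivity estimate are monotone under passing to smaller neighborhoods: the inclusion \eqref{metricregularityA} continues to hold for balls inside a smaller set, and the lower Lipschitz estimate is inherited by any subset. I should be slightly careful in stating the third bullet, since as written it asserts $f|_V^{-1}$ is $\alpha^{-1}$-Lipschitz ``on $V$'' whereas the inverse is naturally defined on $W = f(V)$; the estimate is really the Lipschitz bound for $f|_V^{-1} : W \to V$, and I would phrase the proof accordingly. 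No monodromy or path-lifting argument is needed here, as this is a purely local statement obtained by packaging the two quantitative local estimates together.
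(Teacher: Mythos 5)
Your proof is correct and follows exactly the route the paper intends: the text introduces this theorem with ``Combining these previous results we obtain the local inversion result below'' and defers to Theorem~3.5 of \cite{JaLaMa}, i.e.\ the intended argument is precisely the intersection of the neighborhoods from Theorem~\ref{localopeness} and the local injectivity theorem, followed by the routine verification of the three bullet points as you carry out. Your remark that the last bullet should read ``$\alpha^{-1}$-Lipschitz on $W$'' rather than ``on $V$'' correctly identifies a slip in the statement.
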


\section{Metric regularity, Lipschitz rate of $f^{-1}$, Ioffe constant of surjection and some global inversion conditions}

Let $(X,|\cdot|)$ and $(Y,\pmb{|}\cdot\pmb{|})$ be real Banach spaces, $U$ be an open subset of $X$, $x_0\in U$, and let  $f:U\rightarrow Y$ be a locally Lipschitz map  with a pseudo-Jacobian
$Jf$ satisfying the  chain rule condition on $U$ .  Suppose  $Jf$ is regular at $x_0$.  By Inverse Mapping Theorem above, $f$ is a bi-Lipschitz homeomorphism around $x_0$ and  by Theorem \ref{localopeness} (local openness)  $f$ is open with linear rate at $x_0$. The supremum of the nonnegative real numbers $\alpha$ such that for some neighborhood $V_{x_0}$, $B(f(x);\alpha r)\subset f(B(x; r))$, for all $x\in V_{x_0}$ and all $r>0$ with $B(x;r)\subset V_{x_0}$ is called the {\it rate of surjection of $f$} near $x_0$ \cite{IOFFE2016} or {\it exact covering bound of $f$ around $x_0$} \cite{MORDUKHOVICH2006}--- denoted by cov$f(x_0)$.
So, in this context, $\mbox{cov} f(x_0)\geq\reg Jf (x_0)$.  On the other hand,
 in \cite{IOFFE1987} Ioffe introduced the modulus of surjection of $f$ at $x_0$, defined for every $r>0$  by
$$
{\rm S }(f,x_0)(r)=\sup\{\rho\geq 0:B(f(x_0);\rho)\subset f(B(x_0;r))\}.
$$
Ioffe considers also the quantity now called {\it Ioffe constant of surjection} of $f$ at $x_0$:
$$
\mbox{sur}(f,x_0)=\sup_{\epsilon>0}\left(\inf\left\{\frac{{\rm S}(f,x_0)(r)}{r}:0<r<\epsilon\right\}\right).
$$
Let $V_{x_0}$  be a neighborhood of $x_0$ and a constant $\alpha>0$ such that for every $x\in V_{x_0}$ and $r>0$ with $B(x;r)\subset V_{x_0}$ we have that $B(f(x); \alpha r)\subset f(B(x;r))$. In particular, there is an $\epsilon>0$, depending on $\alpha$, such that for every $0<r<\epsilon$: $$B(f(x_0); \alpha r)\subset f(B(x_0;r)).$$ Therefore, for all $0<r<\epsilon$ we have that $\alpha r \leq {\rm S}(f,x_0)(r)$. So,
$$\alpha\leq\inf\left\{\frac{{\rm S}(f,x_0)(r)}{r}:0<r<\epsilon\right\}\leq \mbox {sur}(f,x_0).$$
Finally we have that if $\reg Jf(x_0)>0$ then:
\begin{equation}\label{desigualdad2}
\mbox {sur}(f,x_0)\geq \mbox{cov} f(x_0)\geq\reg J f(x_0).
\end{equation}
If $f$ is $C^1$  and $df(x_0)$ is a linear isomorphism, actually we have:
$$
\mbox {sur}(f,x_0)= \mbox{cov} f(x_0)=\reg Jf(x_0)=C(df(x_0))=\|df(x_0)^{-1}\|^{-1}.
$$
There exists a close relationship between the rate of surjection of $f$ and the so called {\it Lipschitz rate} of the multivalued function $f^{-1}: Y\rightarrow X$ given by
$$
f^{-1}(y)=\{x\in X: y=f(x)\}.
$$
Recall, $f^{-1}$ has the {\it Aubin property} (or {\it pseudo-Lipschitz property}, or {\it Lipschitz-like property}) around $(y_0,x_0)$ if there exists neighborhoods $W_{y_0}$ and $V_{x_0}$ and a number $\mu>0$ such that $\mbox{dist}(x,f^{-1}(y))\leq\mu \, \pmb{|}y-z\pmb{|}$ provided $y,z\in W_{y_0}, x\in f^{-1}(z)\cap V_{x_0}$. The infimum of such $\mu$
 is the {\it Lipschitz rate of $f^{-1}$ near $(y_0,x_0)$} ---denoted by lip$f^{-1}(y_0|x_0)$.   In general we have that $f$ is open with linear rate around $x_0$ if and only if $f^{-1}$ has the Aubin property near $(f(x_0),x_0)$ if and only if $\mbox{cov} f(x_0)>0$. In this case, see Proposition 2.2  in \cite{IOFFE2016}:
\begin{equation}\label{metricregularity}
\mbox{lip}f^{-1}(f(x_0)|x_0)=\mbox{cov} f(x_0)^{-1}
\end{equation}
Now,  let $m$ be a positive lower semicontinuous function on $[0,\infty)$ such that

\

\begin{enumerate}[label=$(A)$]
\item\label{Ioffecondition} For all $x\in X$,  $\sur Jf(x)\geq m(|x|)$.
\end{enumerate}

\

The essential example that comes from Hadamard's original global inversion theorem of 1906 \cite{Hadamard} is associated to the  nonincreasing function $\mu$ on $[0, \infty)$ given by
\begin{equation}\label{Hadamardquantity}\mu(\rho)=\inf_{|x|\leq\rho} \sur Jf(x).\end{equation}
Indeed,  $\mu$ has countably many (jump) discontinuities; we then set $m(\rho):=\mu(\rho)$
if $\mu$ is continuous at $\rho$ and set $m(\rho):=\lim_{t\rightarrow\rho^+}\mu(t)$ if $\mu$ has a jump discontinuity at $\rho$. So, the mapping $m$ is lower semicontinuous on $[0,\infty)$ and satisfies \ref{Ioffecondition} since $\mu(\rho)\geq m(\rho)$ for all $\rho>0$. If in addition, $\mu(\rho)>0$ for all $\rho>0$ then $m$ is positive on $[0,\infty)$.

This argument  shows that, if $\mu$ is a positive nonincreasing function such that, for all $x\in X$,  $\sur Jf(x)\geq \mu(|x|)$, then there exists an associated positive lower semicontinuous function $m$,  constructed as above, such that \ref{Ioffecondition} holds.

In general, if $m$ is a positive lower semicontinuous function satisfying condition \ref{Ioffecondition}, then for all $r>0$ we have $\alpha_r:=\inf \{ m(\rho) \, : \, 0\leq \rho\leq r \}>0$. Therefore for all $x\in B(0;r)$, $\sur J f(x)\geq \alpha$. Thus $f$ is open with linear rate around every $x\in B(0;r)$ with uniform lower bound of the rate of surjection of $f$ on $B(0; r)$. Since $r$ is arbitrary,  condition \ref{Ioffecondition} is actually a global condition.

\

Furthermore, by \eqref{desigualdad2} and Theorem 1 of  \cite{Io} we have the following:

\begin{theorem}[{\bf Global Surjection Theorem}]
Let $(X,|\cdot|)$ and $(Y,\pmb{|}\cdot\pmb{|})$ be Banach spaces and let  $f:X\rightarrow Y$ be a locally Lipschitz map  with a pseudo-Jacobian
$Jf$ satisfying the  chain rule condition on $X$.  Suppose  that condition \ref{Ioffecondition} holds for some  positive lower semicontinuous function $m$ on $[0, \infty)$. Then $f$ is open with linear rate at every $x\in X$, and for each $r>0$ we have:
\begin{equation}
\label{Ioffesurjectionproperty}B(f(0);{\varrho(r)})\subset f(B(0;r)),
\end{equation}
where
$$\varrho(r)=\int_0^r m(\rho)d\rho.$$
Furthermore, $f:X\rightarrow Y$ is surjective provided that, in addition,
$$
\int_0^\infty m(\rho) d\rho =\infty.
$$
\end{theorem}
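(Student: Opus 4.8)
The plan is to reduce the statement to Ioffe's global surjection theorem (Theorem 1 of \cite{Io}), which we take as given, by verifying its metric hypothesis: namely, that the Ioffe constant of surjection obeys $\mbox{sur}(f,x)\geq m(|x|)$ at every point. The entire analytic content of the conclusion---the integral inclusion \eqref{Ioffesurjectionproperty} and the passage to global surjectivity---is packaged inside Ioffe's theorem, so the work here is to translate the pseudo-Jacobian hypothesis \ref{Ioffecondition} into this metric lower bound on $\mbox{sur}(f,\cdot)$.

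First I would observe that, since $m$ is positive, condition \ref{Ioffecondition} gives $\sur Jf(x)\geq m(|x|)>0$ for every $x\in X$; by the local openness theorem (Theorem \ref{localopeness}), $f$ is therefore open with linear rate at every point of $X$, which is the first assertion. Next I would fix $x\in X$ and establish $\mbox{sur}(f,x)\geq\sur Jf(x)$ (the surjection half of the chain \eqref{desigualdad2}, which requires no injectivity). For any $\alpha$ with $0<\alpha<\sur Jf(x)$, Theorem \ref{localopeness} produces a neighborhood $V$ of $x$ on which $f$ is open with uniform rate $\alpha$; the bookkeeping argument carried out in the discussion preceding \eqref{desigualdad2}---specializing the inclusion $B(f(x');\alpha r)\subset f(B(x';r))$ to $x'=x$ and small $r$, so that $\alpha r\leq{\rm S}(f,x)(r)$---then yields $\alpha\leq\mbox{sur}(f,x)$. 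Letting $\alpha\nearrow\sur Jf(x)$ gives $\mbox{sur}(f,x)\geq\sur Jf(x)\geq m(|x|)$, which is exactly the hypothesis required by Ioffe's theorem.

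With the bound $\mbox{sur}(f,x)\geq m(|x|)$ in hand for the positive lower semicontinuous function $m$, I would invoke Theorem 1 of \cite{Io} to obtain, for each $r>0$, the inclusion $B(f(0);\varrho(r))\subset f(B(0;r))$ with $\varrho(r)=\int_0^r m(\rho)\,{\rm d}\rho$. Finally, if $\int_0^\infty m(\rho)\,{\rm d}\rho=\infty$, then since $m>0$ the function $\varrho$ is increasing and unbounded, so given any $y\in Y$ one may choose $r$ large enough that $\pmb{|}y-f(0)\pmb{|}<\varrho(r)$; then $y\in B(f(0);\varrho(r))\subset f(B(0;r))\subset f(X)$, proving surjectivity.

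I expect the only genuinely delicate point to be the second step, the inequality $\mbox{sur}(f,x)\geq\sur Jf(x)$: one must be careful that local openness provides the covering inclusion with balls centered at the \emph{moving} point $x'$, whereas the modulus ${\rm S}(f,x)$ is defined with balls centered at the fixed base point $x$, and that the $\epsilon$ in the definition of $\mbox{sur}(f,x)$ is chosen compatibly with the neighborhood $V$. This is precisely the reduction already recorded in the text around \eqref{desigualdad2}; everything else is a direct application of the results stated above together with Ioffe's theorem.
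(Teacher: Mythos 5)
Your proposal is correct and follows essentially the same route as the paper: the paper derives this theorem exactly by combining the surjection half of the chain \eqref{desigualdad2} (obtained from Theorem \ref{localopeness} and the bookkeeping with the modulus ${\rm S}(f,x)(r)$ described just before that display) with Theorem 1 of \cite{Io}. Your observation that only the openness-with-linear-rate part (and no injectivity or regularity) is needed to get $\mbox{sur}(f,x)\geq \sur Jf(x)\geq m(|x|)$ matches the paper's intent, and the remaining steps are the same direct appeal to Ioffe's theorem.
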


\

Note that if $\mu$ is the nonincreasing  function given by \eqref{Hadamardquantity}, and $m$ is the associated lower semicontinuous function defined as above, then $\varrho(r)=\int_0^r m(\rho)d\rho=\int_0^r \mu(\rho)d\rho$. Besides, $\mu(\rho)>0$ for all $\rho>0$ if

\

\begin{enumerate}[label=$(B)$]
\item\label{Hadamardcondition} $\int_0^\infty \inf_{|x|\leq\rho} \sur Jf(x) d\rho=\infty.$
\end{enumerate}

\

Therefore we conclude:

\begin{corollary}
Let $(X,|\cdot|)$ and $(Y,\pmb{|}\cdot\pmb{|})$ be Banach spaces and let  $f:X\rightarrow Y$ be a locally Lipschitz map  with a pseudo-Jacobian
$Jf$ satisfying the  chain rule condition on $X$.  Suppose that condition \ref{Hadamardcondition} is satisfied.
Then $f$ is a surjective map, open with linear rate at every $x\in X$, such that for every $r>0$ inclusion \eqref{Ioffesurjectionproperty} holds with
$\varrho(r)=\int_0^r \inf_{|x|\leq\rho} \sur Jf(x) d\rho.$
\end{corollary}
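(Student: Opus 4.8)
The plan is to deduce the statement directly from the Global Surjection Theorem by manufacturing, out of hypothesis \ref{Hadamardcondition}, a positive lower semicontinuous function $m$ for which condition \ref{Ioffecondition} holds. To this end I would set $\mu(\rho)=\inf_{|x|\leq\rho}\sur Jf(x)$ as in \eqref{Hadamardquantity}. Being the infimum of the nonnegative functional $\sur Jf$ over the nested balls $B(0;\rho)$, the function $\mu$ is nonnegative and nonincreasing on $[0,\infty)$, and $\mu(0)=\sur Jf(0)<\infty$.

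The one genuine step is to check that \ref{Hadamardcondition} forces $\mu(\rho)>0$ for every $\rho>0$. Arguing by contradiction, if $\mu(\rho_0)=0$ for some $\rho_0>0$, then monotonicity gives $\mu(\rho)=0$ for all $\rho\geq\rho_0$, whence
$$
\int_0^\infty \mu(\rho)\,d\rho=\int_0^{\rho_0}\mu(\rho)\,d\rho\leq \rho_0\,\mu(0)<\infty,
$$
contradicting \ref{Hadamardcondition}. Thus $\mu$ is strictly positive on $(0,\infty)$. I would then invoke the construction described just before the Global Surjection Theorem: putting $m(\rho)=\mu(\rho)$ at the continuity points of $\mu$ and $m(\rho)=\lim_{t\to\rho^+}\mu(t)$ at its (at most countably many) jumps, one obtains $m$ lower semicontinuous with $0<m(\rho)\leq\mu(\rho)$ on $[0,\infty)$. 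Since $\sur Jf(x)\geq\mu(|x|)\geq m(|x|)$, this $m$ satisfies \ref{Ioffecondition}.

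With such an $m$ in hand the Global Surjection Theorem applies as stated: $f$ is open with linear rate at every $x\in X$ and, for each $r>0$, the inclusion \eqref{Ioffesurjectionproperty} holds with $\varrho(r)=\int_0^r m(\rho)\,d\rho$. Because $m$ and $\mu$ differ only on the null set of jump points of the monotone function $\mu$, their integrals agree, so $\varrho(r)=\int_0^r\mu(\rho)\,d\rho=\int_0^r\inf_{|x|\leq\rho}\sur Jf(x)\,d\rho$, which is the claimed form of $\varrho$. The same identity gives $\int_0^\infty m(\rho)\,d\rho=\int_0^\infty\mu(\rho)\,d\rho=\infty$ by \ref{Hadamardcondition}, so the surjectivity clause of the Global Surjection Theorem shows that $f$ is onto.

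I expect no serious obstacle: the analytic substance is already packaged in the Global Surjection Theorem, and what remains is the positivity argument above together with the elementary remark that altering a monotone function at countably many points leaves its integral unchanged. The only point needing a little care is confirming that $m$ is positive and lower semicontinuous at $\rho=0$, which holds because, $\mu$ being nonincreasing, $\lim_{t\to0^+}\mu(t)=\sup_{t>0}\mu(t)\geq\mu(t')>0$ for any $t'>0$.
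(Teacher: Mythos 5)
Your proposal is correct and follows essentially the same route as the paper: the paper likewise obtains the corollary from the Global Surjection Theorem by passing from $\mu(\rho)=\inf_{|x|\leq\rho}\sur Jf(x)$ to the associated lower semicontinuous minorant $m$, observing that condition \ref{Hadamardcondition} forces $\mu>0$ on $(0,\infty)$ and that $\int_0^r m=\int_0^r\mu$ since the two functions differ only on the countable set of jump points. Your explicit contradiction argument for the positivity of $\mu$ is a detail the paper leaves implicit, but it is the intended justification.
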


Now, suppose that $f$ is  also locally one-to-one at every $x\in X$, e.g. under hypothesis of Inverse Mapping Theorem above, then by Theorem 2 of \cite{Io} $f$ is actually a global homeomorphism onto $Y$. Note that if $f$ is a global homeomorphism, then  for all $y\in Y$:
\begin{equation}\label{lipschitzinequality}
{\rm lip} \,f^{-1}(y|f^{-1}(y))\geq{\rm Lip} \, f^{-1}(y).
\end{equation}
So we get the following extension of Theorem 3.9 of \cite{JaLaMa}. Note that  if $f$ has a global inverse, property by \eqref{Ioffesurjectionproperty} implies that $f$ is a {\it norm-coercive map}, namely $\lim_{|x|\rightarrow\infty}\pmb{|} f(x)\pmb{|}=\infty$.

\begin{theorem}[{\bf Global Inverse Theorem I}]\label{global-1}
Let $(X,|\cdot|)$ and $(Y,\pmb{|}\cdot\pmb{|})$ be Banach spaces and let  $f:X\rightarrow Y$ be a locally Lipschitz map  with a pseudo-Jacobian $Jf$ satisfying the  chain rule condition on $X$.  Suppose that  $Jf$ is regular at every $x\in X$ and:
\begin{enumerate}[label=$(A')$]
\item\label{Ioffeconditionprima} For all $x\in X$,  $\reg Jf(x)\geq m(|x|)$.
\end{enumerate}
 for some  positive lower semicontinuous function $m$ such that $\int_0^\infty m(\rho) d\rho =\infty$. Then $f$ is a norm-coercive  global homeomorphism onto $Y$ and the inverse $f^{-1}$ is Lipschitz  on bounded subsets of $Y$, and  such that for every $y=f(x)\in Y$: $${\rm Lip} \, f^{-1}(y)\leq(m(|f^{-1}(y)|))^{-1}.$$
 \end{theorem}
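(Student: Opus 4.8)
The plan is to reduce the hypothesis $(A')$ to the surjection condition $(A)$ and then assemble the global homeomorphism from the local results already established. Since $Jf$ is regular at every $x\in X$, by definition $\reg Jf(x)=\sur Jf(x)=\inj Jf(x)$, so $(A')$ gives $\sur Jf(x)\geq m(|x|)$ for all $x$, which is exactly $(A)$. I would then invoke the Global Surjection Theorem: $f$ is open with linear rate at every point and, because $\int_0^\infty m=\infty$, $f$ is surjective, with $B(f(0);\varrho(r))\subset f(B(0;r))$ and $\varrho(r)=\int_0^r m(\rho)\,d\rho\to\infty$. On the other hand, regularity of $Jf$ at each point lets me apply the Inverse Mapping Theorem (Theorem \ref{localinverse}) to conclude that $f$ is a local homeomorphism, hence locally one-to-one, at every $x$. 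Combining surjectivity, openness with linear rate, and local injectivity, Theorem 2 of \cite{Io} yields that $f$ is a global homeomorphism onto $Y$. Norm-coercivity is then immediate from \eqref{Ioffesurjectionproperty}: given $R>0$, choosing $r$ with $\varrho(r)>R+\pmb{|}f(0)\pmb{|}$ forces $f^{-1}$ to map the ball of radius $R$ in $Y$ into $B(0;r)\subset X$ by injectivity, so $f^{-1}$ sends bounded sets to bounded sets, equivalently $\pmb{|}f(x)\pmb{|}\to\infty$ as $|x|\to\infty$.

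For the pointwise estimate, fix $y=f(x)\in Y$. Since $f$ is now a genuine single-valued global homeomorphism, \eqref{lipschitzinequality} gives ${\rm Lip}\, f^{-1}(y)\leq\mbox{lip}\,f^{-1}(y\mid f^{-1}(y))$, and the metric-regularity identity \eqref{metricregularity} gives $\mbox{lip}\,f^{-1}(y\mid x)=\mbox{cov}\,f(x)^{-1}$. Feeding in \eqref{desigualdad2} together with $(A')$, namely $\mbox{cov}\,f(x)\geq\reg Jf(x)\geq m(|x|)$, I obtain
$${\rm Lip}\, f^{-1}(y)\leq\mbox{cov}\,f(x)^{-1}\leq(\reg Jf(x))^{-1}\leq(m(|x|))^{-1}=(m(|f^{-1}(y)|))^{-1},$$
which is the claimed bound.

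Finally, to upgrade this pointwise bound to Lipschitz continuity of $f^{-1}$ on bounded subsets, I would fix a bounded set $B\subset Y$ and pass to its closed convex hull $C$, still bounded. By coercivity $f^{-1}(C)$ is bounded, say $f^{-1}(C)\subset B(0;R)$, and since $m$ is positive and lower semicontinuous it attains a positive infimum $\alpha_R:=\inf_{0\leq\rho\leq R}m(\rho)>0$ on the compact interval $[0,R]$. For each $y\in C$ the previous estimate gives ${\rm Lip}\, f^{-1}(y)\leq(m(|f^{-1}(y)|))^{-1}\leq\alpha_R^{-1}$, so for every $\epsilon>0$ the map $f^{-1}$ is $(\alpha_R^{-1}+\epsilon)$-Lipschitz on some neighborhood of each point of $C$. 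For $y_1,y_2\in B$ the segment $[y_1,y_2]$ lies in $C$; covering this compact segment by finitely many such neighborhoods, choosing a Lebesgue number, and chaining the local estimates along a fine partition of the segment yields $|f^{-1}(y_1)-f^{-1}(y_2)|\leq(\alpha_R^{-1}+\epsilon)\,\pmb{|}y_1-y_2\pmb{|}$, and letting $\epsilon\to0$ gives the uniform constant $\alpha_R^{-1}$ on $B$. I expect this last chaining step to be the main technical point: the pointwise and even the Aubin-type estimates are purely local, and turning them into a genuine Lipschitz bound between arbitrarily distant points of $B$ relies essentially on the convexity of $C$, which keeps every connecting segment inside a set whose preimage is a fixed bounded region, so that compactness of each segment converts the uniform local bound into a global one.
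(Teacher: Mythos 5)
Your proposal is correct and follows essentially the same route as the paper: reduction of $(A')$ to $(A)$ via regularity, the Global Surjection Theorem plus local injectivity from the Inverse Mapping Theorem feeding into Theorem 2 of \cite{Io}, norm-coercivity from \eqref{Ioffesurjectionproperty}, the pointwise bound from \eqref{lipschitzinequality}, \eqref{metricregularity} and \eqref{desigualdad2}, and a convexity/chaining argument for the Lipschitz bound on bounded sets. The only (immaterial) difference is that the paper works directly on the convex ball $B(f(0);R)$ with the uniform constant $\alpha_r$ supplied by Theorem \ref{localinverse}, whereas you pass to the closed convex hull of a general bounded set and use the pointwise estimate; both yield the same conclusion.
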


\begin{proof}
It only remains to show that the global inverse map $f^{-1}$ is Lipschitz  on bounded subsets of $Y$. Let $R>0$ be given, and consider $r>0$ such that
$$
\int_0^r m(\rho) d\rho > R.
$$
From \ref{Ioffesurjectionproperty}, we have that $B(f(0); R)\subset f(B(0;r))$. As we have remarked before, $\alpha_r:=\inf \{ m(\rho) \, : \, 0\leq \rho\leq r \}>0$, and thus $\reg Jf(x)\geq m(|x|)\geq \alpha_r>0$ whenever $|x| \leq r$.  Therefore, if we fix $0< \alpha < \alpha_r$, we obtain from Theorem \ref{localinverse} that  $f^{-1}$ is locally $\alpha^{-1}$-Lipschitz on the open ball $B(f(0); R)$. Using the convexity of the ball, a standard argument gives that $f^{-1}$ is in fact $\alpha^{-1}$-Lipschitz on the  ball $B(f(0); R)$, and this concludes the proof.
\end{proof}

If $f:X\rightarrow Y$ is a locally Lipschitz map between reflexive Banach spaces such that the P\'ales-Zeidan generalized Jacobian  $\partial f$ is upper semicontinuous,  then the hypotheses of Global Inverse Theorem I are satisfied if for some positive lower semicontinuous function $m$ and each $x\in X$,  every $T\in \partial f(x)$ is an isomorphism and  satisfies $C^*(T)\geq m(|x|)$.  In particular Corollary 3.10 of \cite{JaLaMa} can be deduced from above result. For a $C^1$ map $f:X\rightarrow Y$,  the hypotheses of Global Inverse Theorem I are satisfied if  for some positive lower semicontinuous function $m$ and for each $x\in X$, we have that $df(x)$ is a linear isomorphism and ${\rm cov } f(x)=C^*(df(x))\geq  m(|x|)$.

\section{Palais-Smale condition and locally bi-Lipschitz homeomorphisms}

Let $(X,|\cdot|)$ be a real Banach space and let $F:X\rightarrow \mathbb{R}$ be a locally Lipschitz functional. We define the lower semicontinuous function:
$$\lambda_F(x)=\min_{w^*\in\partial F(x)}|w^*|_{X^*}$$
By a {\it weight} we mean a continuous nondecreasing function $h:[0,+\infty)\rightarrow[0,+\infty)$ such that $$\int_0^\infty \frac{1}{1+h(\rho)}d\rho=+\infty.$$

\

\noindent{\bf Weighted Chang-Palais-Smale condition.} Following Chang \cite{Chang}, we say that the functional $F:X \rightarrow \mathbb R$ satisfies the weighted Chang-Palais-Smale condition with respect to a weight $h$  if any sequence $\{x_n\}$ in $X$ such that $\{F(x_n)\}$ is bounded and \begin{equation}\label{limiteacero}\lim_{n\rightarrow \infty}\lambda_F(x_n)(1+h(|x_n|))=0\end{equation} contains a (strongly) convergent subsequence.

\

Naturally, the limit of a  converging weighted Chang-Palais-Smale sequence must be a critial point, in the sense that $\lambda_F(x)=0$. Furthermore,  if $F$ is bounded from below then, by the Ekeland Variational Principle there exists always a minimizing weighted Chang-Palais-Smale sequence \cite{GUTU2018}. In other words, for any weight $h$:
\begin{itemize}
\item[\buletita] If  $\{x_n\}\subset X$ is a sequence such that $\lim_{n\rightarrow\infty}x_n=\hat x$ and satisfying \eqref{limiteacero} for $h$, we have that  $\lambda_F(\hat x)=0$.
\item[\buletita] If  $F$ is  bounded from below then for $h$ then there is a  sequence $\{x_n\}$ such that $\lim_{n\rightarrow\infty}F(x_n)=\inf_X F$ and satisfying \eqref{limiteacero}.
\end{itemize}

\
\noindent Let  $f:X\rightarrow Y$ be a  locally Lipschitz function and $y\in Y$ be fixed. Consider the functional $F_y(x):=\pmb{|}f(x)-y\pmb{|}$ defined in \eqref{funcionFY}. Suppose that:

\

\begin{enumerate}[label=$(C)$]
\item\label{condicionchang} The locally Lipschitz functional $F_y$ satisfies the weighted Chang-Palais-Smale condition for some  weight $h$.
\end{enumerate}

\

Then the first property above gives us the existence of a minimizing sequence converging to a critical point of $F_y$, so there exists a solution of the non-linear equation $f(x)=y$. The global injectivity comes from the second property above and a  mountain-pass theorem if $f$ has appropriate local properties e.g. under hypothesis of Inverse Mapping Theorem above.  So we have the following extension of Theorem 1 of \cite{GUTU2018} given for  $C^1$ mappings,  in turn, a generalization of Theorem 3.1 of \cite{ISW} for $Y$  Hilbert space and $h = 0$; see Remark \ref{aclaracion} below.

\begin{theorem}\label{main}
Let $(X,|\cdot|)$ and $(Y,\pmb{|}\cdot\pmb{|})$ be Banach spaces and let  $f:X\rightarrow Y$ be a locally Lipschitz map  with a pseudo-Jacobian $Jf$ regular at every $x\in X$ and satisfying the strong chain rule condition on $X$. Suppose that for some $y\in Y$, the functional $F_y(x)=\pmb{|}f(x)-y\pmb{|}$ satisfies  \ref{condicionchang}. Then there exists a unique solution of the nonlinear equation $f(x)=y$.
\end{theorem}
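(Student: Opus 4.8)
The plan is to split the statement into its existence and uniqueness parts, both of which I would reduce to a single observation: under the standing hypotheses the functional $F_y$ has \emph{no} critical points away from the solution set. Concretely, I would first establish the following. If $Jf$ is regular at $x$ and $f(x)\neq y$, then $\lambda_{F_y}(x)>0$, i.e. $x$ cannot be a critical point of $F_y$. Since $Jf$ satisfies the strong chain rule condition, the Clarke subdifferential is controlled by $\partial F_y(x)\subseteq\Delta F_y(x)=\partial\pmb{|}\cdot\pmb{|}(f(x)-y)\circ\overline{\rm co}(Jf(x))$, so it suffices to show $0\notin\Delta F_y(x)$. Suppose instead $0=T^{*}\xi$ for some $\xi\in\partial\pmb{|}\cdot\pmb{|}(f(x)-y)$ and $T\in\overline{\rm co}(Jf(x))$. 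Because $f(x)-y\neq 0$, the element $\xi$ is a norming functional and hence $\pmb{|}\xi\pmb{|}_{Y^{*}}=1$; in particular $\xi\neq 0$. On the other hand, for every $T\in{\rm co}\,Jf(x)$ one has $C(T)\geq\reg Jf(x)>0$ (since ${\rm co}\,Jf(x)\subseteq{\rm co}\,Jf(B(x;r))$ for all $r>0$), and this bound survives passage to $\overline{\rm co}(Jf(x))$ because $C$ is $1$-Lipschitz for the operator norm. Then $0=|T^{*}\xi|_{X^{*}}\geq C(T)\,\pmb{|}\xi\pmb{|}_{Y^{*}}=C(T)>0$, a contradiction; hence $\lambda_{F_y}(x)>0$.

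For existence I would argue as follows. The functional $F_y\geq 0$ is locally Lipschitz and bounded below, so by the Ekeland-type second property quoted before condition \ref{condicionchang} there is a sequence $\{x_n\}$ with $F_y(x_n)\to\inf_X F_y=:c$ and $\lambda_{F_y}(x_n)(1+h(|x_n|))\to 0$. Condition \ref{condicionchang} then yields a subsequence $x_{n_k}\to\hat x$; by the first property $\lambda_{F_y}(\hat x)=0$, and by continuity $F_y(\hat x)=c$. If $c>0$ we would have $f(\hat x)\neq y$ at a critical point, contradicting the key observation; therefore $c=0$, i.e. $f(\hat x)=y$, which furnishes a solution.

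For uniqueness I would suppose that $x_0\neq x_1$ both solve $f(x)=y$, so both are global minima of $F_y$ of value $0$. Since $Jf$ is regular at $x_0$, the Inverse Mapping Theorem makes $f$ a bi-Lipschitz homeomorphism near $x_0$, giving $\alpha,\rho>0$ with $\rho<|x_1-x_0|$ and $F_y(x)=\pmb{|}f(x)-f(x_0)\pmb{|}\geq\alpha|x-x_0|$ whenever $|x-x_0|\leq\rho$; in particular $F_y\geq\alpha\rho=:\delta>0$ on the sphere $|x-x_0|=\rho$, while $F_y(x_0)=F_y(x_1)=0$. This is exactly the mountain-pass geometry, so the level $c=\inf_{\gamma}\max_t F_y(\gamma(t))$ taken over paths $\gamma$ from $x_0$ to $x_1$ satisfies $c\geq\delta>0$. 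Feeding this into a nonsmooth, weighted mountain-pass theorem (the Chang version, as in \cite{Chang} and \cite{GUTU2018}) together with condition \ref{condicionchang} produces a critical point $\hat x$ with $F_y(\hat x)=c>0$ and $\lambda_{F_y}(\hat x)=0$. But $c>0$ forces $f(\hat x)\neq y$, which once more contradicts the key observation; hence the solution is unique.

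I expect the uniqueness half to be the main obstacle. The delicate point is to have available a mountain-pass theorem valid for the merely locally Lipschitz functional $F_y$ under the \emph{weighted} Chang-Palais-Smale condition rather than the classical one, and to check that the mountain-pass level $c$ is genuinely attained: the theorem should supply a weighted Palais-Smale sequence at level $c$ to which condition \ref{condicionchang} then applies to extract the critical point $\hat x$. A secondary technical care, already flagged in the key observation, is to ensure that the bound $C(T)>0$ indeed survives the closure $\overline{\rm co}(Jf(x))$ in whatever topology the strong chain rule condition imposes on $\Delta F_y(x)$.
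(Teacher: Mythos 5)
Your proposal is correct and follows essentially the same route as the paper: the key observation that $\lambda_{F_y}(x)>0$ whenever $f(x)\neq y$ (via the strong chain rule inclusion $\partial F_y(x)\subseteq \Delta F_y(x)$ and the lower bound $C(T)\geq \reg Jf(x)$ on $\overline{\rm co}\,Jf(x)$, obtained exactly by the norm-approximation argument you flag) is the paper's Claim, the existence part is the same Ekeland minimizing-sequence argument, and the uniqueness part is the same mountain-pass contradiction, for which the paper invokes the Schechter--Katriel theorem (Theorem 7.2 in \cite{Ka}), which does supply a weighted Palais--Smale sequence at the mountain-pass level as you require. The only cosmetic difference is that you derive the positive barrier on the sphere from the local bi-Lipschitz lower bound, while the paper derives it from openness with linear rate.
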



\begin{proof}
{\it Injectivity:} Let $y\in Y$ be fixed. Suppose that there are two different points $u$ and $e$ in $X$ such that $f(u)=f(e)=y$.   Since  $f$ is open with linear rate around $u$, there exist $\alpha >0$ and  $\epsilon>0$  such that:
\begin{equation}\label{linearopen} B(y;\alpha r)\subset f(B(u;r)), \mbox{ for all } 0<r<\epsilon.\end{equation}
Let $r\in (0,\epsilon)$ be small enough such that $f|_{B_r(u)}:B_r(u)\rightarrow f(B_r(u))$ is a homeomorphism,   and set $\rho = \alpha r>0$. Suppose first that $u=0$. We have that:
\begin{enumerate}
\item[\buletita] $F_y(0)=0\leq\rho$ and $F_y(e)=0\leq\rho$.
\item[\buletita] $|e|\geq r$, since $f|_{B_r(0)}$ is injective.
\item[\buletita] $F_y(x)\geq\rho$ for $|x|=r$, in view of \eqref{linearopen}.
\end{enumerate}
By Schechter-Katriel Mountain-Pass Theorem (see Theorem 7.2 in \cite{Ka}), there is a sequence $\{x_n\}\subset X$ such that $\lim_{n\rightarrow\infty }F_y(x_n)= c$ for some $c\geq\rho$ and satisfying \eqref{limiteacero}. Since $F_y$ satisfies the weighted Chang-Palais-Smale-condition, the sequence $\{x_n\}$ has a convergent subsequence $\{x_{n_k}\}$ with limit $\hat x$. Therefore $\lambda_{F_y}(\hat x)=0$,  and  $f(\hat x)\neq y$ since $\lim_{k\rightarrow\infty}F_y(x_{n_k})=F_y(\hat x)=c\geq\rho>0$. Therefore, we get a contradiction since:

\begin{claim}\label{claim}
For every $x\in X$, $\lambda_{F_y}(x)=0$ implies $f(x)=y$.
\end{claim}

In the case that $u\neq 0$,  we can consider $G_y(x)=F_y(u-x)$ instead of $F_y(x)$ and carry on an analogous reasoning.

\noindent {\it Surjectivity:} Let $y\in Y$ be fixed.  As we pointed out before, there is a minimizing  sequence $\{x_n\}\subset X$ such that $\lim_{n\rightarrow\infty}F_y(x_n)=\inf_X F_y$ and satisfying \eqref{limiteacero}. Since $F_y$ satisfies weighted Chang-Palais-Smale-condition, the sequence $\{x_n\}$ has a convergent subsequence $\{x_{n_k}\}$ with limit $\hat x$. As before, we have that $\hat x$ is a critical point of $F_y$. By Claim \ref{claim} we have that  $f(\hat x)=y$.
\end{proof}

\begin{proof}[Proof of Claim.]
Suppose that $\lambda_{F_y}(x)=0$ and  $f(x)\neq y$. Let $w^*\in\partial F_y(x)$. Since $Jf$ satisfies the strong chain rule condition $\partial F_y(x)\subset \Delta F_y(x)$. Then there is $y^*\in\partial \pmb{|}\cdot\pmb{|}(f(x)-y)$ and $T\in \overline{\rm co}\hspace{0.01in} Jf(x)$ such that  $w^*=y^*\circ T$. Since $f(x)-y\neq 0$ we have that $\pmb{|}y^*\pmb{|}_{Y^*}=1$. We have that
$$
|w^*|_{X^*}=|T^*y^*|_{X^*}\geq\inf_{\pmb{|}v^*\pmb{|}_{Y^*}=1}|T^*v^*|_{X^*}=C(T).
$$
Now, for every $\epsilon>0$ there is $T_\epsilon\in {\rm co}\hspace{0.01in} Jf(x)$ such that $\|T-T_\epsilon\|<\varepsilon$. Therefore, $C(T)=C^*(T)\geq C^*(T_\epsilon)-\epsilon\geq \reg Jf(x)-\epsilon$. So, we have that
$|w^*|_{X^*}\geq\reg Jf(x)$. Taking the minimum over $\partial F_y(x)$ we obtain that
\begin{equation} \label{claim-inequality}
\lambda_{F_y}(x)\geq \reg Jf(x).
\end{equation}
Therefore $\reg Jf(x)=0$ and we get contradiction.
\end{proof}

\begin{remark}\label{aclaracion}
In \cite{GUTU2018} and \cite{ISW}  the functional $G_y(x)=\frac{1}{2} F_y(x) ^2$ is considered instead of $F_y$. Suppose that $G_y$  satisfies the weighted Chang-Palais-Smale condition for some  weight $h$. Let $\{x_n\}$ any sequence in $X$ such that $\{F_y(x_n)\}$ is bounded and $\lambda_{F_y}(x_n)(1+h(|x_n|))=0$. Since $\lambda_{G_y}=F_y(x)\cdot \lambda_{F_y(x)}$ for all $x\in X$ and $y\in Y$, then $G_y(x_n)$ is bounded and $\lambda_{G_y}(x_n)(1+h(|x_n|))=0$. Therefore $\{x_n\}$ contains a (strongly) convergent subsequence.  So, if $G_y$ satisfies the weighted Chang-Palais-Smale condition for some  weight $h$ then $F_y$ satisfies the Chang-Palais-Smale condition with the same weight $h$.
\end{remark}

By Theorem \ref{main}, equations \eqref{lipschitzinequality} and \eqref{desigualdad2} we have:

\begin{theorem}[{\bf Global Inverse Theorem II}]\label{global-2}
Let $(X,|\cdot|)$ and $(Y,\pmb{|}\cdot\pmb{|})$ be Banach spaces and let  $f:X\rightarrow Y$ be a locally Lipschitz map  with a pseudo-Jacobian $Jf$ regular at every $x\in X$ and satisfying the strong chain rule condition on $X$.  Suppose  that for every $y\in Y$  the locally Lipschitz functional  $F_y(x)=\pmb{|}f(x)-y\pmb{|}$ satisfies \ref{condicionchang}. Then $f$ is a norm-coercive homeomorphism locally bi-Lipschitz onto $Y$ with: $${\rm Lip} \, f^{-1}(y)\leq (\reg Jf(f^{-1}(y)))^{-1}.$$
 \end{theorem}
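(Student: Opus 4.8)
The plan is to read everything off Theorem~\ref{main} together with the three metric-regularity facts quoted just above the statement, the only genuinely new work being the coercivity. First I would settle \emph{bijectivity}: since hypothesis \ref{condicionchang} is assumed for \emph{every} $y\in Y$, Theorem~\ref{main} applies to each $y$ and produces a unique solution of $f(x)=y$, so $f:X\to Y$ is a bijection. Because $f$ is locally Lipschitz (hence continuous) and $Jf$ is regular at every point, the Inverse Mapping Theorem~\ref{localinverse} shows that $f$ is a local bi-Lipschitz homeomorphism around each $x$; in particular $f$ is open and its local inverses are Lipschitz. A continuous open bijection is a homeomorphism, so $f^{-1}:Y\to X$ is continuous and, being locally Lipschitz, exhibits $f$ as a locally bi-Lipschitz homeomorphism of $X$ onto $Y$.

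Next I would derive the quantitative bound on $f^{-1}$. Fix $y\in Y$ and put $x_0=f^{-1}(y)$, so $f(x_0)=y$. Since $f$ is a global homeomorphism, \eqref{lipschitzinequality} gives ${\rm Lip}\,f^{-1}(y)\le{\rm lip}\,f^{-1}(y\,|\,x_0)$; the identity \eqref{metricregularity} rewrites the right-hand side as $(\mbox{cov}\,f(x_0))^{-1}$; and \eqref{desigualdad2} gives $\mbox{cov}\,f(x_0)\ge\reg Jf(x_0)$, hence $(\mbox{cov}\,f(x_0))^{-1}\le(\reg Jf(x_0))^{-1}$. Chaining these,
$$
{\rm Lip}\,f^{-1}(y)\le{\rm lip}\,f^{-1}(y\,|\,f^{-1}(y))=\bigl(\mbox{cov}\,f(f^{-1}(y))\bigr)^{-1}\le\bigl(\reg Jf(f^{-1}(y))\bigr)^{-1},
$$
which is the asserted estimate; this is exactly where the equality $\reg Jf=\sur Jf=\inj Jf$ at a regular point enters, through \eqref{desigualdad2}.

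The remaining and, I expect, genuinely delicate point is \emph{norm-coercivity}, $\lim_{|x|\to\infty}\pmb{|}f(x)\pmb{|}=\infty$. Unlike the finite-dimensional case, this cannot follow from the homeomorphism property alone: a homeomorphism between infinite-dimensional Banach spaces may carry a bounded set onto an unbounded one, so condition \ref{condicionchang} must be used in an essential way here. The clean reformulation I would aim for is that, for a fixed $y$, every sublevel set $\{x:F_y(x)\le M\}$ is bounded; this is equivalent to $F_y(x)\to\infty$ as $|x|\to\infty$, and since $\pmb{|}f(x)\pmb{|}\ge F_y(x)-\pmb{|}y\pmb{|}$ it yields norm-coercivity of $f$ at once. (Equivalently, one could package coercivity through the surjection inclusion \eqref{Ioffesurjectionproperty} and the observation before Theorem~\ref{global-1}, since injectivity turns $B(f(0);\varrho(r))\subset f(B(0;r))$ into $f^{-1}(B(f(0);\varrho(r)))\subset B(0;r)$ with $\varrho(r)\to\infty$.)

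The obstacle is establishing bounded sublevel sets from the weighted Chang--Palais--Smale condition. Arguing by contradiction from an unbounded sequence $\{x_n\}$ with $F_y(x_n)\le M$, the weighted condition already forces (along a subsequence) a slope lower bound $\lambda_{F_y}(x_n)\ge\delta/(1+h(|x_n|))$, for otherwise $\{x_n\}$ would be a weighted Palais--Smale sequence and hence admit a convergent subsequence, contradicting $|x_n|\to\infty$. Turning this \emph{pointwise} lower bound into a contradiction is the crux: one must run a descent/deformation trajectory for $F_y$ out to infinity and use that the weight condition $\int_0^\infty(1+h(\rho))^{-1}\,d\rho=\infty$ forces $F_y$ to drop by an infinite amount along it, which is impossible since $F_y\ge0$. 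This is precisely the nonsmooth, weighted analogue of the Hadamard integral mechanism, combined with the fact from \eqref{claim-inequality} that $\lambda_{F_y}(x)\ge\reg Jf(x)>0$ away from $f^{-1}(y)$, so there are no spurious critical points to stop the descent. Making this deformation rigorous in the locally Lipschitz setting---and reconciling it with the Ekeland-type construction, which directly yields only \emph{minimizing} weighted Palais--Smale sequences---is the main difficulty; everything else is a routine assembly of the quoted results.
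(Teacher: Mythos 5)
Everything up to norm-coercivity in your proposal coincides with the paper's own (one-line) proof: bijectivity is Theorem \ref{main} applied to each $y\in Y$, the locally bi-Lipschitz homeomorphism statement is Theorem \ref{localinverse} together with the fact that a continuous open bijection is a homeomorphism, and the estimate ${\rm Lip}\, f^{-1}(y)\leq {\rm lip}\, f^{-1}(y|f^{-1}(y))=\mbox{cov} f(f^{-1}(y))^{-1}\leq (\reg Jf(f^{-1}(y)))^{-1}$ is exactly the intended chaining of \eqref{lipschitzinequality}, \eqref{metricregularity} and \eqref{desigualdad2}.

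The genuine gap is the one you flag yourself: norm-coercivity. Two remarks. First, your parenthetical alternative via \eqref{Ioffesurjectionproperty} is not available here: that inclusion is proved under condition \ref{Ioffecondition} for a \emph{positive} lower semicontinuous $m$ with divergent integral, and the hypotheses of this theorem do not supply such an $m$ (in infinite dimensions $\inf_{|x|\leq\rho}\sur Jf(x)$ can vanish even though $\sur Jf>0$ everywhere and is lower semicontinuous, since closed balls are not compact). Second, your main route --- bounded sublevel sets of $F_y$ for a single $y$ --- is the right one, but the missing lemma is not a deformation argument; it is the weighted, nonsmooth version of the classical fact that a functional bounded from below satisfying the Palais--Smale condition is coercive, and it is proved with Ekeland's variational principle applied near infinity. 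Concretely: if $c:=\liminf_{|x|\to\infty}F_y(x)<\infty$, pick $z_n$ with $|z_n|\to\infty$ and $F_y(z_n)\to c$, and apply Ekeland on the complement of a large ball, with the perturbation radius calibrated by $\psi(r)=\int_0^{r}(1+h(t))^{-1}\,dt$ (which tends to $\infty$ because $h$ is a weight); this produces $x_n$ with $|x_n|\to\infty$, $F_y(x_n)\to c$ and $\lambda_{F_y}(x_n)(1+h(|x_n|))\to 0$, and condition \ref{condicionchang} then yields a convergent subsequence, a contradiction. In particular your worry that the Ekeland construction ``only yields minimizing sequences'' is unfounded: restricted to the exterior of large balls it yields almost-critical sequences at the level $\liminf_{|x|\to\infty}F_y$, which is what is needed. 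For what it is worth, the paper's own proof is silent on coercivity (it invokes only Theorem \ref{main}, \eqref{lipschitzinequality} and \eqref{desigualdad2}); the missing ingredient is precisely this coercivity lemma, which is how the $C^1$ case is handled in \cite{GUTU2018}.
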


\begin{remark}\label{combi}
Let $f:X\rightarrow Y$ be a locally Lipschitz map between Banach spaces  with a pseudo-Jacobian $Jf$ regular at every $x\in X$ and satisfying the strong chain rule condition on $X$. {\it If $f$ satisfies condition \ref{Ioffeconditionprima} for a positive nonincreasing and continuous function $m$  such that $\int_0^\infty m(\rho)d\rho=\infty$ then, for every $y\in Y$, the functional $F_y$ satisfies \ref{condicionchang} for the weight $$h(\rho):=\frac{m(0)}{m(\rho)}-1.$$}
\noindent Indeed, it is easy to verify that $h$ is actually a weight, namely, it is positive, nondecreasing, continuous map such that $\int_0^\infty\frac{1}{1+h(\rho)}d\rho=\infty$. Furthermore, for all $x\in X$:
$$0<m(0)<\reg Jf(x)(1+h(|x|)).$$
By the proof of (\ref{claim-inequality}) in the Claim above  we have that, if $y\in Y$ and $f(x)\neq y$, then $\lambda_{F_y}(x)\geq \reg Jf(x)$. Therefore,  if $f(x)\neq y$ then:
\begin{equation}\label{desigualdadclave}\lambda_{F_y}(x)(1+h(|x|))\geq m(0)>0.\end{equation}
Suppose that there is a sequence $\{x_n\}$ in $X$ such that $\lim_{n\rightarrow\infty} F_y(x_n)=c>0$ for some $y\in Y$. Then,  without loss of generality, we can assume that $f(x_n)\neq y$ for all natural $n$. Therefore, by \eqref{desigualdadclave} $\lim_{n\rightarrow \infty}\lambda_{F_y}(x_n)(1+h(x_n))$ can't be zero.  In other words, for each $y\in Y$, there is no sequence $\{x_n\}$ in $X$ such that $$\lim_{n\rightarrow\infty}F_y(x_n)=c>0 \mbox{ and }\lim_{n\rightarrow \infty}\lambda_{F_y}(x_n)(1+h(x_n))=0.$$
Note that, as we conclude in the first part of the proof of Theorem \ref{main},  this implies that $f$ is injective. Now, let $\{x_n\}\subset X$  be such that $\lim_{n\rightarrow\infty} F_y(x_n)=0$ and $\lim_{n\rightarrow\infty}\lambda_{F_y}(x_n)(1+h(|x_n|))=0$. Then, by \eqref{desigualdadclave} there exists  $m>0$ such that $f(x_n)=y$ for all $n\geq m$. Since $f$ is injective, this means that $x_n=f^{-1}f(x_n)=f^{-1}(y)$ for all $n\geq m$, so $\{x_n\}$ converges to $f^{-1}(y)$. Therefore \ref{condicionchang} is fulfilled.
\end{remark}

\section{Example}

In this section we give an example where the conditions for global invertibility introduced in the previous sections can be easily checked. We will be concerned with the following integro-differential equation, which has been considered,  with several variants, in \cite{ISW}, \cite{GK-15} and \cite{GK-16}:
\begin{equation}\label{integrodif}
x'(t) + \int_0^t \Phi(t, \tau, x(\tau)) \, d\tau = y(t), \quad \text{for a. e.} \,\, t\in [0,1]
\end{equation}
with initial condition:
\begin{equation}\label{integrodif-inicial}
x(0)=0.
\end{equation}
Here $y$ is a given function in the space $L^p[0, 1]$, where $1<p<\infty$ is fixed. It is natural to consider in this setting the space $W^{1,p}_0[0,1]$ of all absolutely continuous functions $x:[0,1] \rightarrow \mathbb R$ with $x(0)=0$ and such that $x'\in L^p[0,1]$. The space $W^{1,p}_0[0,1]$ is complete for the norm:
$$
\Vert x \Vert_{W^{1,p}_0} : = \left( \int_0^1 \vert x'(\tau) \vert^p \, d\tau \right)^{1/p}
$$

Then by a {\it solution} of the equation (\ref{integrodif}) with initial condition  (\ref{integrodif-inicial}) we mean a function $x\in W^{1,p}_0[0,1]$ satisfying (\ref{integrodif}) almost everywhere in $[0,1]$.

\

We denote $\Delta:=\{(t, \tau) \in [0,1]\times [0,1] \, : \, \tau \leq t\}$, and we will assume that the function $\Phi:\Delta \times \mathbb R \rightarrow \mathbb R$  satisfies the following conditions:
\begin{itemize}
\item [(i)] $\Phi(\cdot, \cdot, u)$ is measurable in $ \Delta $ for all $ u \in \mathbb R.$
\item [(ii)] There exist functions $a, b\in L^p(\Delta)$ such that
$$
\vert \Phi(t, \tau, u) \vert \leq a(t, \tau) \cdot \vert u \vert + b(t, \tau) \quad \text{for a. e.} \,\, (t, \tau) \in \Delta \, \, \,  \text{and all} \, \, u \in \mathbb R.
$$
\item [(iii)] There exists a continuous function $\theta:[0, \infty) \rightarrow (0,1)$ with the property that $\int_0^\infty (1-\theta(r)) dr =\infty$, and such that
$$
\vert \Phi(t, \tau, u) - \Phi(t, \tau, v)\vert \leq \theta(r) \vert u-v \vert \quad \text{for a. e.} \,\, (t, \tau) \in \Delta \, \, \,  \text{and all} \, \, \vert u \vert, \vert v \vert \leq r.
$$
\end{itemize}

Note that Condition (iii) is fulfilled, in particular, if there exists a constant $0<\theta <1$ such that $\Phi$ is globally $\theta$-Lipschitz in the third variable.

\begin{example}
Let $1<p<\infty$, and suppose that the function $\Phi$ satisfies conditions (i), (ii) and (iii) above. Then for each $y\in L^p[0, 1]$ there exists a unique solution of equation (\ref{integrodif}) with initial condition  (\ref{integrodif-inicial}) in the space $W^{1,p}_0[0,1]$.
\end{example}
\begin{proof}
Consider the Banach spaces $X=W^{1,p}_0[0,1]$ and $Y=L^p[0,1]$, and the map
$$
f:W^{1,p}_0[0,1] \rightarrow L^p[0, 1]
$$
defined as $f= T + g$, where $T, g:W^{1,p}_0[0,1] \rightarrow L^p[0, 1]$ are given respectively by
$$
T(x) = x'
$$
and
$$
g(x)(t) = \int_0^t \Phi(t, \tau, x(\tau)) \, d\tau.
$$
It is clear that $T$ is a  linear isomorphism which is, in fact, an isometry; that is,
$$
\Vert T(x) \Vert_{L^p} = \Vert x' \Vert_{L^p} = \left( \int_0^1 \vert x'(\tau) \vert^p \, d\tau \right)^{1/p} = \Vert x \Vert_{W^{1,p}_0}.
$$
Thus we have that $C(T) = \Vert T^{-1} \Vert^{-1} =1$.

On the other hand, we are next going to check that $g$ is Lipschitz on bounded subsets of $W^{1,p}_0[0,1]$. First note that, given $x\in W^{1,p}_0[0,1]$, for every $t\in [0,1]$ we have:
$$
\vert x(t) \vert = \left\vert \int_0^t x'(\tau) \, d\tau \right\vert \leq \int_0^1 \vert x'(\tau) \vert \, d\tau = \Vert x' \Vert_{L^1}\leq \Vert x' \Vert_{L^p} = \Vert x \Vert_{W^{1,p}_0},
$$
and therefore $\Vert x \Vert_{\infty} \leq \Vert x \Vert_{W^{1,p}_0}$.

Now let $r\geq 0$ and consider $u, v \in W^{1,p}_0[0,1]$ with $\Vert u \Vert_{W^{1,p}_0} \leq r$ and $\Vert v \Vert_{W^{1,p}_0} \leq r$. For each $t\in[0,1]$:
$$
\vert g(u)(t) - g(v)(t) \vert \leq \int_0^t \vert \Phi(t, \tau, u(\tau)) - \Phi(t, \tau, v(\tau))\vert \, d\tau
\leq \int_0^1 \theta (r) \cdot \vert u(\tau) - v(\tau) \vert \, d\tau
$$
$$
\leq \theta (r) \cdot \Vert u-v \Vert_{\infty} \leq \theta (r) \cdot  \Vert u-v \Vert_{W^{1,p}_0}.
$$
Then
$$
\Vert g(u) - g(v) \Vert_{L^p} = \left(\int_0^1 \vert g(u)(t) - g(v)(t) \vert^p d\,t \right)^{1/p} \leq  \theta (r) \cdot  \Vert u-v \Vert_{W^{1,p}_0}.
$$
This implies in particular that, for every $x\in W^{1,p}_0[0,1]$ with $\Vert x \Vert_{W^{1,p}_0}\leq r$ we have that ${\rm Lip} \, g(x)\leq \theta (r+\epsilon)$ for every $\epsilon>0$. By the continuity of $\theta$ we deduce that ${\rm Lip} \, g(x)\leq \theta (r)$ whenever $\Vert x \Vert_{W^{1,p}_0}\leq r$.

From Example \ref{sum} we have that $Jf(x):= T + {\rm Lip} \, g(x) \cdot \overline{B}_{L(X, Y)} $ is a pseudo-Jacobian of $f$, satisfying the strong chain rule condition. Let us see that $Jf$ is also  regular at every $x\in W^{1,p}_0[0,1]$. Indeed, suppose that $\Vert x \Vert_{W^{1,p}_0}\leq r$. For each $R\in {\mathcal L}(X, Y)$ with $\Vert R \Vert \leq {\rm Lip} \, g (x)$  we have $\Vert R \circ T^{-1}\Vert \leq {\rm Lip} \, g (x) \leq \theta (r)<1$. In particular, the operator ${\rm Id}_Y + R \circ T^{-1}$ is an isomorphism on $Y$. In this way we obtain that $T + R$ is an isomorphism. On the other hand, also using Example \ref{sum},  we have
$$\begin{aligned}
\reg  Jf (x) &= \sur Jf (x) = \inf \{C(T + R) \, : \, \Vert R \Vert \leq {\rm Lip} \, g(x)\}
\\
&\geq \inf \{C(T + R) \, : \, \Vert R \Vert \leq \theta (r)\} \geq 1-\theta (r) = m(r)>0,
\end{aligned}$$
where the continuous function $m(r):= 1-\theta (r)$ satisfies that $\int_0^\infty m(r) dr =\infty$.

Therefore, condition \ref{Ioffeconditionprima} and all the requirements of Theorem \ref{global-1} are satisfied,  and the desired conclusion follows. Also, from Remark \ref{combi} we see that, for each $y\in L^p[0,1]$, the functional $F_y$ defined in \eqref{funcionFY} satisfies the weighted Chang-Palais-Smale condition \ref{condicionchang} for the  weight
$$
h(r)=\frac{\theta (r)-\theta (0)}{1-\theta (r)}.
$$
Thus Theorem \ref{global-2} also applies in this case.
\end{proof}

\bibliographystyle{plainurl}

\begin{thebibliography}{998}

\bibitem{AUBIN2006} J. P.  Aubin and  I.  Ekeland, \emph{Applied Nonlinear Analysis}, Dover publications,  New York (2006).

\bibitem{AZE2006} D. Az\'e and J-N. Corvellec, \emph{Variational methods in classical open mapping theorems}, J. Convex Anal. \textbf{13} (2006), 477--488.

\bibitem{BANACHMAZUR} S. Banach and S. Mazur, \emph{\"Uber mehrdeutige stetige Abbildungen}, Studia Math, \textbf{5} (1934), 174--178.

\bibitem{Chang} K. C. Chang, \emph{ Variational methods for nondifferentiable functionals and their applications to partial differential equations} J. Math. Anal. Appl. \textbf{80} (1981), 102--129.

\bibitem{Clarkebook} F. H. Clarke, \emph{Optimization and Nonsmooth Analysis}, Classics in Applied Mathematics 5, SIAM, Philadelphia (1990).

\bibitem{Do} A. L. Dontchev,  \emph{The Graves theorem revisited}, J. Convex Anal. \textbf{3} (1996), 45--53.

\bibitem{GGS} M. Galewski, E. Galewska and E. Schmeidel, \emph{ Conditions for having a diffeomorphism between two Banach spaces}, Electron. J. Differential Equations 2014, No. \textbf{99}, 6 pp.

\bibitem{GK-15} M. Galewski and M. Koniorczyk,  \emph{ On a global diffeomorphism between two Banach spaces and some application}, Studia Sci. Math. Hungar. \textbf{52} (2015),  65--86.

\bibitem{GK-16} M. Galewski and M. Koniorczyk,  \emph{On a global implicit function theorem and some applications to integro-differential initial value problems}, Acta Math. Hungar. \textbf{148} (2016),  257--278.

\bibitem{GaGuJa} I. Garrido, O. Gut\'u and J. A. Jaramillo, \emph{Global inversion and covering maps on length spaces}, Nonlinear Anal. \textbf{73} (2010),  1364--1374.

\bibitem{GORDON} W. B. Gordon, \emph{On the diffeomorphism of Euclidean space}, Amer. Math. Monthly \textbf{79} (1972), 755--759.

\bibitem{Gu} O. Gut\'u, \emph{On global inverse theorems}, Topol. Methods in Nonlinear Anal. \textbf{49} (2017),  401--444.	

\bibitem{GUTU2018} O. Gut\'u, \emph{Chang Palais-Smale condition and global inversion}, Bull. Math. Roum. \textbf{61} (2018), 293--303.

\bibitem{GutuJaramillo} O. Gut\'u and J. A. Jaramillo, \emph{Global homeomorphisms and covering projections on metric spaces}, Math. Ann. \textbf{338} (2007),  75--95.

\bibitem{Hadamard} J. Hadamard, \emph{Sur les transformations ponctuelles}, Bull. Soc. Math. France \textbf{34}  (1906), 71--84.

\bibitem{ISW} D. Idczak,  A. Skowron and S. Walczak, \emph{On the diffeomorphisms between Banach and Hilbert spaces}, Adv. Nonlinear Stud. \textbf{12} (2012),  89--100.

\bibitem{Io}  A. D. Ioffe, \emph{Global surjection and global Inverse mapping theorems in Banach spaces}, Ann. New York Acad. Sci. \textbf{491} (1987), 181--188.

\bibitem{IOFFE1987} A. D. Ioffe, \emph{On the local surjection property}, Nonliear Anal. \textbf{11} (1987), 565--592.

\bibitem{IOFFE2016} A. D. Ioffe, \emph{Metric regularity---a survey part 1. theory}, J. Aus. Math. Soc. \textbf{101} (2016), 188---243.

\bibitem{IOFFE2017} A. D. Ioffe, \emph{Variational Analysis of Regular Mappings}, Springer, (2017).

\bibitem{JaLaMa}  J. A. Jaramillo,  S. Lajara and O. Madiedo, \emph{Inversion of nonsmooth maps between Banach spaces.} To appear in  Set-Valued Var. Anal.

\bibitem{JaMaSa}  J. A. Jaramillo,  O. Madiedo and L. S\'{a}nchez-Gonz\'{a}lez, \emph{Global inversion of nonsmooth  mappings on Finsler manifolds}, J. Convex Anal. \textbf{20} (2013),  1127--1146.

\bibitem{JaMaSa1} J. A. Jaramillo,  O. Madiedo and L. S\'{a}nchez-Gonz\'{a}lez, \emph{Global inversion of nonsmooth mappings using pseudo-Jacobian matrices} Nonlinear Anal. \textbf{108} (2014), 57--65.

\bibitem{JL0}V. Jeyakumar and D.T. Luc, \emph{Approximate Jacobian matrices for nonsmooth continuous maps and $C^1$-optimization},  SIAM J. Control Optim. \textbf{36} (1998),  1815--1832.

\bibitem{JL} V. Jeyakumar and D.T. Luc, \emph{Nonsmooth Vector Functions and Continuous Optimization}, Optimization and its Applications 10, Springer, New York (2008).

\bibitem{John} F. John, \emph{On quasi-isometric maps I}, Comm. Pure Appl. Math. \textbf{21} (1968), 77--110.

\bibitem{Ka} G. Katriel, \emph{Mountain pass theorems and global homeomorphism theorems}, Ann. Inst. H. Poincar\'e Anal. Non Lin\'eaire \textbf{11}  (1994), 189--209.

\bibitem{MORDUKHOVICH2006} B. S. Mordukhovich, \emph{Variational Analysis and Generalized Differentiation I}, Springer, (2006).

\bibitem{PALES1997} Z. P\'ales, \emph{Inverse and implicit function theorems for nonsmooth maps in Banach spaces}, J. Math. Anal. Appl. \textbf{209} (1997), 202--220.

\bibitem{PZ} Z. P\'ales and V. Zeidan, \emph{Generalized Jacobian for functions with infinite dimensional range and domain}, Set-Valued Anal. \textbf{15} (2007), 331--375.

\bibitem{Plastock} R. Plastock, \emph{Homeomorphisms between Banach spaces}, Trans. Amer. Math. Soc. \textbf{200} (1974), 169--183.

\bibitem{POURCIAU1977} B. H. Pourciau, \emph{Analysis and optimization of Lipschitz continuous mappings}, J. Optim. Theory Appl. \textbf{22} (1977), 311--351.

\bibitem{Pourciau1982} B. H. Pourciau, \emph{Hadamard's theorem for locally Lipschitzian maps}, J. Math. Anal. Appl. \textbf{85} (1982),  279--285.

\bibitem{Pourciau1988} B. H. Pourciau, \emph{Global invertibility of nonsmooth mappings}, J. Math. Anal. Appl. \textbf{131} (1988),	170--179.

\bibitem{RABIER1997} P. J. Rabier, \emph{Ehresmann fibrations and Palais-Smale conditions for morphisms of Finsler manifolds}, Annals Math. \textbf{146}, (1997), 647--691.

\end{thebibliography}

\end{document}